\def\mylabel#1{\label{#1}   \rlap{\hskip1cm\leftline{#1}}   }
\def\mylabel#1{\label{#1}   \proplabeL{#1} \hskip-3pt  }
\def\thlabel#1{\label{#1}   \proplabeL{#1} \hskip-3pt  }
\def\mylabel#1{\label{#1}}
\def\thlabel#1{\label{#1}}
\def\pd{\partial}
\def\myDelta{{\mathtt \Delta}\hspace{-0.5pt}}
\newtheorem{theorem}{Theorem}[section]
\newtheorem{lemma}[theorem]{Lemma}
\newtheorem{proposition}[theorem]{Proposition}
\theoremstyle{definition}     % italic or bold etc. 
\newtheorem{definition}[theorem]{Definition}
\newtheorem{remark}[theorem]{Remark}
\numberwithin{equation}{section}
\newcommand{\simrightarrow}{\smash{\mathop{\rightarrow}\limits^{\sim}}} 
\begin{document}

%\rightline{2007/4/23}

\title[BCOV ring and holomorphic anomaly equation]{ 
{BCOV ring and holomorphic anomaly equation}
}

\author[S. Hosono]{ Shinobu Hosono }

\address{
Graduate School of Mathematical Sciences, 
University of Tokyo, Meguro-ku, 
Tokyo 153-8914, Japan
}
\email{hosono@ms.u-tokyo.ac.jp}

%\author[]{}

%\address{}
%\email{}

%\subjclass[2000]{ *** }

\begin{abstract}
We study certain differential rings over the moduli 
space of Calabi-Yau manifolds. In the case of an elliptic curve, we 
observe a close relation to the differential ring of 
quasi-modular forms due to Kaneko-Zagier\cite{KZ}. 
\end{abstract}

\maketitle

\vskip1cm

%%%%%%%%%%%%%%%%%%%%%%%%%%%%%%%%%%%%%%%%%%%

\section{{\bf Introduction }}

%\vskip0.3cm

Since the pioneering work by Candelas, de la Ossa, 
Green and Parkes \cite{CdOGP} in 1991, 
the theory of variation of Hodge structures  
has been one of the indispensable tools in the study of 
mirror symmetry of Calabi-Yau manifolds and its application to 
Gromov-Witten theory or enumerative geometry on Calabi-Yau manifolds. 
In particular, in the generalization due to Bershadsky, Cecotti, Ooguri 
and Vafa (BCOV) to higher genus Gromov-Witten potentials, the 
theory of variation of Hodge structures was combined with a framework which 
is called $t$-$t^*$ geometry \cite{CV}. $t$-$t^*$ geometry is a deformation 
theory of $N=2$ supersymmetric quantum field theory in two dimensions, and 
there is a natural hermitian (real) 
structure in the space of observables. BCOV 
identifies this hermitian structure with the hermitian structure over the 
moduli space of Calabi-Yau manifolds given by the Weil-Petersson 
metric, and have proposed a profound recursive relation, called {\it 
holomorphic anomaly equation}, for higher genus 
Gromov-Witten potentials. 

\vskip0.1cm

In case of dimension one, i.e. for elliptic curves, the counting 
problem and higher genus Gromov-Witten potentials have been determined 
by Dijkgraaf \cite{Di} in 1995, 
where it was remarked that BCOV theory is closely 
related to the theory of elliptic quasi-modular forms. In the same 
proceedings volume as \cite{Di}, Kaneko and Zagier have presented a general 
theory of quasi-modular forms introducing the (differential) ring of almost 
holomorphic modular forms. It is also found in \cite{HST} 
that, for an rational elliptic surface, the higher genus Gromov-Witten 
potentials are expressed by quasi-modular forms, extending the 
genus zero result by \cite{MNW},\cite{MNVW}. 

\vskip0.1cm

For Calabi-Yau threefolds, it has been expected that the 
BCOV holomorphic anomaly equation is defined over a certain differential 
ring which generalizes the  
almost holomorphic elliptic modular forms due to Kaneko and Zagier. 
Recently, in physics literatures, Yamaguchi and Yau \cite{YY} and 
later Alim and L\"ange \cite{AL} have made important developments toward 
the structure of the expected differential ring of Calabi-Yau threefolds 
(, see also \cite{ABK},\cite{GKMW}). 
In this paper, to make a  parallel argument to 
the theory due to Kaneko and Zagier, we introduce three different differential 
rings ${\mathcal R}_{BCOV}^0$, ${\mathcal R}_{BCOV}^{\Gamma}$ and 
${\mathcal R}_{BCOV}^{hol}$ over the moduli space of Calabi-Yau threefolds. 
We call these 
differential rings simply {\it BCOV rings}\footnote{Although, for simplicity, 
we use this terminology in this paper, we can call these rings 
{\it differential rings on the moduli spaces} as just defined.}. 
Our BCOV rings ${\mathcal R}_{BCOV}^\Gamma$ and 
${\mathcal R}_{BCOV}^{hol}$, should be regarded as a natural 
generalization of the ring of almost holomorphic modular forms and 
quasi-modular forms, respectively, and may be recognized in the 
original work\cite{BCOV2} and more explicitly in recent physics 
literatures\cite{YY},\cite{AL},\cite{ABK},\cite{GKMW}. 
We will introduce another form of 
the BCOV ring ${\mathcal R}_{BCOV}^0$ and observe that, with this ring, 
our parallelism to Kaneko-Zagier theory becomes complete. 

\vskip0.1cm

Here we summarize briefly the theory of quasi-modular forms due to 
Kaneko and Zagier. 
Kaneko-Zagier\cite{KZ} starts from a ring 
\begin{equation}
{\bf C}[[\tau]] \big[ \frac{1}{\tau-\bar \tau} \big] \;\;,
\mylabel{eqn:KZ-0}
\end{equation}
with $\tau$ in the upper-half plane, and the standard modular group action,  
$\tau \mapsto \frac{a \tau + b}{c \tau + d}$. Inside this large ring, one  
first considers the almost modular forms $\widehat M(\Gamma)_k$  
of weight $k$ as almost holomorphic functions $F(\tau,\bar \tau)$ on 
the upper-half plane which transforms like a modular form of weight $k$;
\[
F\big( \frac{a \tau + b}{c \tau + d}, 
\overline{ \frac{a \tau + b}{c \tau + d} } \big) = (c \tau + d)^k 
F(\tau,\bar \tau) \;\;.
\]
Then the ring of the almost holomorphic modular forms 
$\widehat M(\Gamma) =$ \break 
$\oplus_{k\geq 0} \widehat M(\Gamma)_k$ becomes a differential ring under 
$D_\tau:  \widehat M(\Gamma)_k \rightarrow \widehat M(\Gamma)_{k+2}$, with 
$D_\tau:=\frac{1}{2\pi i} \big( \frac{\pd \;}{\pd \tau} + 
\frac{k}{\tau-\bar\tau} \big)$. Elements of $\widehat M(\Gamma)$ have 
an expansion 
$F=\sum_{m\geq 0} c_m(\tau) \big(\frac{1}{\tau -\bar \tau}\big)^m$, 
and by taking the 
first coefficient $c_0(\tau)$ we obtain holomorphic objects. Kaneko-Zagier 
shows that this map defines a (differential) ring isomorphism 
$\varphi: \widehat M(\Gamma) \rightarrow \widetilde M(\Gamma)$, where 
$\widetilde M(\Gamma) = {\bf C}[E_2(\tau),E_4(\tau),E_6(\tau)]$ is the ring 
of the quasi-modular forms with the differential $\pd_\tau:=\frac{1}{2\pi i} 
\frac{\pd \;}{\pd \tau}$.  Our observation here is that the ring 
(\ref{eqn:KZ-0}) is defined by the K\"ahler geometry on the upper-half 
plane, and has a natural generalization to the Weil-Petersson geometry 
on the moduli space of Calabi-Yau manifolds. Based on this, we will introduce 
our BCOV ring ${\mathcal R}_{BCOV}^0$ in terms of purely geometric data, and 
subsequently introduce other forms of the ring, ${\mathcal R}_{BCOV}^\Gamma$, 
${\mathcal R}_{BCOV}^{hol}$. 
We may schematically write our parallelism of the BCOV rings to the relevant 
rings in Kaneko-Zagier theory:
\[
\begin{aligned} 
&{\bf C}[[\tau]] \big[ \frac{1}{\tau-\bar \tau} \big] & \supset \;\;\;
&\widehat M(\Gamma)  & \xrightarrow{\;\;\; \varphi \;\;\; } 
& \;\;\; \widetilde M(\Gamma) \\
&\;\;\;\;\; {\mathcal R}_{BCOV}^0 & \rightarrow \;\;\;
&{\mathcal R}_{BCOV}^{\Gamma}  &\xrightarrow{\bar t \rightarrow \infty} 
& \;\;\; {\mathcal R}_{BCOV}^{hol} \;\;.\\
\end{aligned}
\]
As one see in the arrow ${\mathcal R}_{BCOV}^0 \rightarrow 
{\mathcal R}_{BCOV}^{\Gamma}$, instead of $\supset$, the relations 
shown in this diagram are not exact correspondences but should be 
understood simply as parallelism. 
In fact, in our BCOV ring, following \cite{BCOV2}, we work with mermorphic 
sections of certain bundles instead of (almost) holomorphic forms in 
Kaneko-Zagier theory. Details will be described in the text, however 
it should be helpful to have this schematic diagram in mind. 

\vskip0.2cm 

The main result of this paper is the introduction of the BCOV ring 
${\mathcal R}_{BCOV}^0$ (Definition 3.3, Theorem 3.5) and making 
the parallelism to Kaneko-Zagier theory of quasi-modular forms complete.

\vskip0.2cm

Construction of this paper is as follows. To make the paper self-contained, 
in section 2, we review the geometry of the moduli space of Calabi-Yau 
manifold, which is called special K\"ahler geometry, and set up our 
notations. In section 3, we define our BCOV rings. In subsection (3-1), 
we introduce the first form of our BCOV (differential) ring 
${\mathcal R}_{BCOV}^0$ based on the special K\"ahler geometry 
(Definition 3.1, Theorem 3.5). 
We remark that the BCOV ring ${\mathcal R}_{BCOV}^0$ is 
infinitely generated, however there is a natural reduction 
${\mathcal R}_{BCOV}^{0,red}$ to a finitely generated ring. In (3-2), 
we consider modular (monodromy) property of the ring and 
we will define a differential ring ${\mathcal R}_{BCOV}^{\Gamma}$ as 
the ring of monodromy invariants. We, then, understand in our framework
the process of `fixing a holomorphic (mermorphic) ambiguities in 
the propagator functions' given in the section 6.3 of \cite{BCOV2}. 
In (3-3), BCOV rings ${\mathcal R}_{BCOV}^0$ 
and ${\mathcal R}_{BCOV}^{\Gamma}$ will be determined explicitly for an 
elliptic curve. There we provide a precise relation to the theory of 
quasi-modular forms (Propositions 3.11, 3.12). 
In (3-4), the final form ${\mathcal R}_{BCOV}^{hol}$ 
is defined under a choice of a symplectic basis of the middle dimensional 
homology group. In section 4, toward an application to the holomorphic anomaly 
equation, we introduce the holomorphic anomaly equation in the form 
appeared in \cite{YY},\cite{AL}. Considering holomorphic anomaly equation 
in the reduced ring ${\mathcal R}_{BCOV}^{0,red}$ we note that the equation 
simplifies to a system of linear differential equation (Proposition 4.3) 
which is easy to handle.  Conclusions and discussion are given in section 5. 
There, the equivalence of the modular anomaly equation in \cite{HST} to BCOV 
holomorphic anomaly equation is also announced.

\vskip0.5cm
\noindent
{\bf Acknowledgments:}
The main result of this paper was announced in the workshop 
``Number Theory and Physics at the Crossroads'', Sep. 21--26, 2008 at 
Banff International Research Station. The author would like to thank the 
organizers for providing a wonderful research environment there. He also 
would like to thank M.-H. Saito for valuable discussions. 
This work is supported in part by Grant-in Aid Scientific Research 
(C 18540014).

\vskip0.5cm

%\newpage

\section{{\bf Special K\"ahler geometry on deformation spaces} }

%\vskip0.5cm

{\bf (2-1) Period integrals.} 
Let us consider a family of Calabi-Yau 3-folds 
${\mathcal Y}=\{ Y_x \}$ over a small complex domain $B$ with fibers 
$Y_x$ ($x \in B$). As such a family, we will consider hypersurfaces 
or complete intersections in projective toric varieties, and 
assume that the local family eventually extends to a family over a 
toric variety ${\mathcal M}$ $(={\bf P}_{Sec(\Sigma)}$ with the 
secondary fan, see eg. \cite{GKZ}). We also assume that 
${\rm dim} {\mathcal M} ={\rm dim} H^{2,1}(Y_{x_0})$ for a smooth $Y_{x_0}$. 

We fix a smooth $Y_{x_0} (x_0 \in B)$ as above. We denote the cohomology 
$H^3(Y_{x_0},{\mathbf Z})$ by $H_{x_0}$ and write the symplectic form there by 
\begin{equation}
\langle u, v \rangle := \sqrt{\text{--}1} \int_{Y_{x_0}} u \cup v \;\;.
\mylabel{eqn:symplectic}
\end{equation}
We choose a symplectic basis $\{ \alpha_I, \beta^J \}_{0 \leq I,J \leq r}$ 
satisfying $\langle \alpha_I, \beta^J \rangle = \delta_I^{\;J}$, 
$\langle \alpha_I, \alpha_J \rangle = \langle \beta^I, \beta^J \rangle=0$, 
and we denote its dual homology basis by 
$\{ A^I, B_J \}_{0 \leq I,J \leq r}$  
$(r:=\dim  H^{2,1}(Y_{x_0}))$. With respect to this basis, we write 
the symplectic form 
\[
Q =
\left( 
\begin{matrix} 0 & E \\ - E & 0 \end{matrix} \right)\;,
\]
where $E=E_{r+1}$ represents the unit matrix of size $(r+1)$.  
We define the period domain 
\[
{\mathcal D}=\{ [\omega] \in {\mathbf P}(H_{x_0} \otimes {\bf C}) \,| \, 
\langle \omega, \omega \rangle=0, \langle \omega, \overline \omega \rangle >0 \} \;.
\]
We choose a holomorphic three form of 
the fiber $Y_x$ $(x \in B)$ and denote it by $\Omega_x:=\Omega(Y_{x})$. 
Then the period map ${\mathcal P}_0: B \rightarrow {\mathcal D}$ is defined by
\[
{\mathcal P}_0(x) = 
\big[ \sum_I \left(\int_{A^I} \Omega_x \right)\alpha_I + 
\sum_J \left(\int_{B_J} \Omega_x \right) \beta^J\big] \;\;.
\]
Using path-dependent identification $H_3(Y_x,{\bf Z}) 
\cong H_3(Y_{x_0},{\bf Z})$, we may globalize this period map on $B$ to 
${\mathcal M}$ by introducing a covering space $\tilde {\mathcal M}$. 
We denote the resulting period map 
${\mathcal P}:\tilde {\mathcal M} \rightarrow {\mathcal D}$ and assume 
$\Gamma \subset Sp(2r+2,{\bf Z})$ as the covering group. 
In this paper, we will write the period map 
${\mathcal P}(x) = \big[ \vec\omega(x) \big]$ (or 
${\mathcal P}_0(x)=\big[ \vec\omega(x) \big] \; (x \in B)$) 
with the notations for the period integrals, 
\[
\vec\omega(x)=\sum_I X^I(x) \alpha_I + \sum_J P_J(x) \beta^J  \;\;.
\]

We write by ${\mathcal U}$ the restriction to ${\mathcal D}$ of the 
tautological line bundle ${\mathcal O}(-1)$ over 
${\bf P}(H_{x_0}\otimes {\bf C})$. We then set ${\mathcal L}
={\mathcal P}^* {\mathcal U}$, i.e., the pullback to 
$\tilde{\mathcal M}$. Complex conjugate of ${\mathcal L}$ will be 
denoted by $\overline{\mathcal L}$. 
The sections of ${\mathcal L}^{\otimes n} \otimes \overline 
{\mathcal L}^{\otimes m}$ will be often referred to 
as `sections of weight $(n,m)$'. 
The period integral $\vec\omega(x)$ may be considered as a section of 
${\mathcal L}$, and thus has weight $(1,0)$.

\vskip0.3cm

{\bf (2-2) Prepotential.} 
The symplectic form (\ref{eqn:symplectic}) naturally induces one form $\theta$ 
on ${\mathcal D}$ by $\theta := \langle d\omega, \omega \rangle$. With this 
one form, $({\mathcal D},\theta)$ becomes a holomorphic contact manifold of 
dimension $2r+1$. Since locally, the period map ${\mathcal P}_0 
: B \rightarrow {\mathcal D}$ is an embedding \cite{Bo}\cite{Ti}\cite{To} 
and also  
$\theta|_{{\mathcal P}_0(B)}=0$ due to Griffiths transversality, we 
know that the image of the period map ${\mathcal P}_0$ is a Legendre 
submanifold. Combining this with Gauss correspondence in projective geometry, 
it is found in general \cite{BG} that 
the image of the period map ${\mathcal P}_0$ can be recovered by the half 
of the period integrals $X^I(x)=\int_{A^I} \Omega_x$. More concretely, it is 
known that: 
\begin{list}{}{
\setlength{\leftmargin}{20pt}
\setlength{\labelwidth}{20pt}
}
\item[1)]
The map $x \mapsto [X^0(x), \cdots, X^r(x)] \in {\bf P}^r$ 
is an local isomorphism $B \rightarrow {\bf P}^r$, 
\item[2)]
Integrating 
$\theta|_{{\mathcal P}_0(B)}=0$ on $B$, we can write the other half of the 
period integrals,  
\[
P_J(x) = \frac{\partial {\mathcal F}(X^I)}{\partial X^J} \;\; (J=0,1,\cdots,r),
\]
in terms of a holomorphic function ${\mathcal F}(X)$  
called {\it prepotential}.  
\end{list}
The function ${\mathcal F}(X)$ is an holomorphic function  
of $X^0(x)$, $\cdots$, $X^r(x)$ and has the following homogeneous 
property 
\[
\sum_{I=0}^r X^I(x) \frac{\partial{\mathcal F}}{\partial X^I} = 2 
{\mathcal F(X)} \;\;.
\] 
This potential function exists locally for the small domain $B$. 
When globalizing 
the above local arguments to $\tilde {\mathcal M}$, we naturally see that the 
monodromy group $\Gamma$ plays a role for the definition of ${\mathcal F(X)}$ 
(see below). The group action of $\Gamma$ is referred to as 
{\it duality transformation} in physics(, see e.g. \cite{CRTP} and 
references therein). Here we remark that the holomorphic 
prepotential has a simple relation to the so-called Griffiths-Yukawa 
coupling of the family $\{ Y_x \}_{x \in B}$;
\begin{equation}
- \int_{Y_x} \hspace{-0.1cm} \Omega_x \cup  
\frac{\partial\;}{\partial x^i}
\frac{\partial\;}{\partial x^j}
\frac{\partial\;}{\partial x^k} \Omega_x = 
\hspace{-0.3cm} \sum_{I,J,K=0}^r 
\hspace{-0.2cm}
\frac{\partial X^I}{\partial x^i} 
\frac{\partial X^J}{\partial x^j} 
\frac{\partial X^K}{\partial x^k} 
\frac{\partial^3 {\mathcal F}(X)}{\partial X^I \partial X^J \partial X^K} \;,
\mylabel{eqn:yukawa-Cijk}
\end{equation}
where the l.h.s. is will be written by $C_{ijk}(x)$ hereafter. We denote 
$\overline C_{\bar i \bar j \bar k}$ the complex conjugate of $C_{ijk}(x)$.

\vskip0.3cm

{\bf (2-3) Period matrix (1) .} 
The most important object to introduce the BCOV anomaly equation is the 
classical period matrix. For simplicity, let us assume that our family 
$\{Y_x\}$ is given by a family of hypersurfaces in a (smooth) toric variety 
${\bf P}_\Sigma^4$, with the parameter $x=(x^1,x^2,\cdots,x^r)$ compactified 
to a toric variety ${\mathcal M}$. We then consider ${\mathcal A}^q_k$, the set of rational $q$ forms 
on ${\bf P}_\Sigma^4$ with a pole order less than 
$k$ along $Y_x$, and set the cohomology group 
\[
{\mathcal H}_k = {\mathcal A}^4_k/d {\mathcal A}^{3}_{k-1} \;\;.
\]
Obviously we have ${\mathcal H}_1 \subset {\mathcal H}_2 \subset \cdots $, 
and, in fact, this stabilizes at ${\mathcal H}_4$ $(={\mathcal H}_5=\cdots)$ 
to the rational 4 forms ${\mathcal H}$ of poles along $Y_x$. 
We note that, for 3-folds, the `tubular' map 
$\tau: H_3(Y_x,{\bf Z}) \simrightarrow 
H_4({\bf P}_\Sigma \setminus Y_x, {\bf Z})$ is an isomorphism
([Gr, Proposition 3.5]), and is related to the Poincar\'e 
residue map $R: {\mathcal H} \rightarrow H^3(Y_x,{\bf C})$ along $Y_x$ by 
\[
\int_{\tau(\gamma)} \omega = \int_{\gamma} R(\omega) \;\;(\omega \in {\mathcal H}_k).
\]
This residue map is an isomorphism, and more precisely, maps the 
filtration 
${\mathcal H}_1 \subset {\mathcal H}_2 \subset {\mathcal H}_3 \subset 
{\mathcal H}_4$ to the Hodge filtration 
\[
F^{3,0} \subset F^{3,1} \subset F^{3,2} \subset F^{3,3} \;\;.
\mylabel{eqn:Hodge-filt}
\]
The holomorphic three form $\Omega_x=R(\omega_0)$ may then be given by a basis 
$\omega_0$ of ${\mathcal H}_1 \cong F^{3,0}$. We take a basis 
$\omega_0, \omega_1, \cdots, \omega_r$ of ${\mathcal H}_2$, and define 
the period matrix 
\begin{equation}
{\bf \Omega}=\left( 
\begin{matrix} 
\int_{\tau(A_0)} \omega_0 & \cdots &
\int_{\tau(A_r)} \omega_0 & 
\int_{\tau(B_0)} \omega_0 & \cdots &
\int_{\tau(B_r)} \omega_0 \\
& \vdots &  & & \vdots \cr
\int_{\tau(A_0)} \omega_r & \cdots &
\int_{\tau(A_r)} \omega_r & 
\int_{\tau(B_0)} \omega_r & \cdots &
\int_{\tau(B_r)} \omega_r \\
\end{matrix} 
\right)
\mylabel{eqn:period-matrix}
\end{equation}
as $(r+1,2r+2)$ matrix. The first row of this matrix coincides with the 
period integral $\vec\omega(x)=\sum_I X^I(x) \alpha_I + \sum_J P_J(x) 
\beta^J$, and the monodromy group $\Gamma$ acts on this period matrix from 
the right. 

The following properties of ${\bf \Omega}$ are consequences of the filtration 
(\ref{eqn:Hodge-filt}) and the Hodge-Riemann bilinear relations;
\begin{equation}
\begin{matrix}
& 1)& \;\; {\bf \Omega} \; Q \; {}^t\!{\bf \Omega}=0 \;\; \hfil & \hspace{5cm}\\
& 2)& \;\; \sqrt{\text{--}1} 
           {\bf \Omega} \; Q \; {}^t\!\overline{\bf \Omega} >0 \;\;. \hfil &\;\; 
\end{matrix}
\mylabel{eqn:bilinear-rel}
\end{equation}
Here 2) means that when we decompose the $(r+1) \times (r+1)$ hermitian 
matrix $\sqrt{-1}{\bf \Omega} Q {}^t\!\overline{\bf \Omega}$ 
into the block form 
compatible with the filtration ${\mathcal H}_1 \subset {\mathcal H}_2$, 
then the first diagonal block ( $1\times 1$ matrix) is positive definite 
and the whole $(r+1) \times (r+1)$ hermitian matrix has $1$ 
positive eigenvalue and $r$ negative eigenvalues. 

\vskip0.1cm

In our case of hypersurfaces (or complete intersections) in toric varieties 
${\bf P}_\Sigma^4$, the basis $\omega_0$ for the rational differential 
${\mathcal H}_1$ can be given explicitly by the defining equation of $Y_x$ 
with the deformations $x^1,\cdots,x^r$. Then our assumption hereafter 
for the family $\{ Y_x \}_{x \in B}$ is that the derivatives 
\begin{equation}
\partial_1 \omega_0 , \cdots,
\partial_r \omega_0
\quad (\partial_i \omega_0 :=\frac{\partial \;}{\partial x^i} \omega_0) 
\mylabel{eqn:basis-H2}
\end{equation}
span ${\mathcal H}_2$ together with the basis $\omega_0$ of ${\mathcal H}_1$ 
for each small complex domain $B \subset {\mathcal M}$.  
It will be useful to define a notation $\partial_0 \omega_0 \equiv \omega_0$. 
Now using $\int_{\tau(\gamma)} \partial_i \omega_0 = \int_\gamma 
R( \partial_i \omega_0) = \partial_i \int_\gamma R(\omega_0)$, we can write 
the period matrix simply by 
\begin{equation}
{\bf \Omega} = 
\left( \begin{matrix} 
\partial_i X^I & \partial_i P_J  
\end{matrix} \right) = 
\left( \begin{matrix} 
\partial_i X^I &  \partial_i X^J \tau_{IJ}  
\end{matrix} \right)_{0 \leq i,I \leq r}
\mylabel{eqn:period-matrix-XP}
\end{equation}
where we define $\tau_{IJ}=\frac{\partial^2\;}
{\partial X^I \partial X^J} {\mathcal F}(X)$, 
and set $\partial_0 X^I \equiv X^I, \partial_0 P_J \equiv P_J$. 
In the above formula, and also hereafter, the repeated indices are assumed to 
be summed over (Einstein's convention) unless otherwise mentioned. 
Using the property 1) in the previous paragraph (2-2), we may assume 
\[
\det ( \partial_i X^I )_{0 \leq i,I \leq r} = (X^0)^{r+1} 
\det \left( \partial_i \Big(\frac{X^I}{X^0}\Big) \right)_{1 \leq i,I \leq r}\not=0
\]
for $x$ such that $X^0(x)\not=0$. Hence, at least locally, we can 
normalize the period matrix in the form 
\[
(\partial_i X^I)^{-1} {\bf \Omega} =  ( E \; \tau  ) \;\;.
\]
In this normalized form, the bilinear relation 1) in (\ref{eqn:bilinear-rel}) 
is trivial since $\tau_{IJ}=\tau_{JI}$, while 2) entails 
\[
\sqrt{\text{--}1} ( \overline \tau -\tau ) >0 \;\;,
\]
which means the matrix ${\rm Im} \, \tau$ has one  positive and 
$r$ negative eigenvalues. Here we note a similarity  
to the period matrix of genus $g$ curves, however the mixed property of the 
eigenvalues is a new feature in higher dimensions. 

Finally, we note that the monodromy group $\Gamma$ acts on the 
normalized period matrix from the right, and for $\left( 
\begin{smallmatrix} D & B \\ C & A 
\end{smallmatrix} \right) \in \Gamma$ we have 
\begin{equation}
\tau \mapsto (C\tau+D)^{-1} (A\tau+B) \;\;.
\mylabel{eqn:ABCD-act-tau}
\end{equation}
Since $\frac{\pd \;}{\pd X^I} \frac{\pd\;}{\pd X^J} {\mathcal F}=\tau_{IJ}$, 
this describes the transformation 
property of the prepotential which is defined locally for the family 
over $B$.

\vskip0.3cm

{\bf (2-4) Period matrix (2).} 
Period matrix (\ref{eqn:period-matrix-XP}) has been defined entirely 
in the holomorphic category, since it is based on the Hodge filtration.  
One may modify the Hodge filtration to Hodge decomposition if we incorporate 
a hermitian structure coming from the K\"ahler geometry on ${\mathcal M}$. 
Let us first note that over the small domain $B$, and hence on ${\mathcal 
M}$ except the degeneration loci, there exists 
a K\"ahler metric $g_{i\bar j}=\partial_i \partial_{\bar j} 
K(x,\bar x)$ $(1 \leq i,\bar j \leq r)$ with the K\"ahler potential 
\[
K(x,\bar x)
=- \log \{ \langle \Omega_x, \overline \Omega_x \rangle \}
=- \log \{ \langle R(\omega_0), \overline{ R(\omega_0)} \rangle \}
\;\;.
\]
This K\"ahler metric is called Weil-Petersson metric on ${\mathcal M}$.
The bases $\partial_0 \omega_0\equiv \omega_0; \partial_1 \omega_0, \cdots, 
\partial_r \omega_0$ which are compatible with the filtration ${\mathcal H}_1 
\subset {\mathcal H}_2$, or the Hodge filtration $F^{3,0} \subset F^{3,1}$, 
may now be modified to 
\[
D_0 \omega_0\equiv \omega_0; D_1 \omega_0, \cdots, 
D_r \omega_0 \;\; (D_i = \partial_i + K_i, i=1,\cdots,r),
\]
where $K_i = \partial_i K(x,\bar x)$. One should observe that, since $\langle 
R(\omega_0), R(D_i \omega_0) \rangle$ $=0$ holds for $i=1,\cdots,r$, 
these bases are compatible to the Hodge 
decomposition $H^{3,0}(Y_x) 
\oplus H^{2,1}(Y_x)$. Correspondingly, the period matrix 
(\ref{eqn:period-matrix-XP}) may be modified to 
\begin{equation}
{\bf \Omega} = \left( \,
D_i X^I \;  D_i P_J \, \right) =
\left( \,
D_i X^I \; D_i X^J \tau_{IJ} \, \right)_{0 \leq i,I \leq r} \;\;,
\mylabel{eqn:period-matrix-D}
\end{equation}
where we set $D_0 X^I \equiv X^I, D_0 P_J \equiv P_J$. 
Since we have $\det (D_i X^I)_{0 \leq i,I \leq r}$ 
$= \det ( \partial_i X^I )_{0 \leq i,I \leq r} \not=0$, the normalized period 
integral has a similar form as before;
\[
(D_i X^I)^{-1} {\bf \Omega} = \left( \begin{matrix} \; E &\tau \; 
\end{matrix} \right)
\;\;. 
\]

The same monodromy group $\Gamma$ acts from the right on 
the period matrix.  In contrast to this, the left action shows 
a nice connection to the K\"ahler geometry on ${\mathcal M}$.

\vskip0.3cm

{\bf (2-5) Special K\"ahler geometry on ${\mathcal M}$.} 
After some algebra, we have for the K\"ahler metric 
\[
g_{i\bar j}=\partial_i \partial_{\bar j} K(x,\bar x) = 
-e^{K(x,\bar x)} \langle R(D_i \omega_0), \overline{R(D_j \omega_0)} \rangle .
\]
With respect to this, we introduce the metric connections 
\[
\Gamma_{i j}^k = g^{k\bar k} \partial_i g_{j \bar k} \;\;,\;\;
\Gamma_{\bar i \bar j}^{\bar k} = g^{\bar k k} \partial_{\bar i} g_{\bar j k} \;,
\]
for the holomorphic tangent bundle $T{\mathcal M}$ and the anti-holomorphic 
tangent bundle $T'{\mathcal M}$, respectively. The curvature tensor for these 
connections are given by 
\[
R_{i \bar j \; l}^{\;\; k} = - \partial_{\bar j} \Gamma_{il}^{k}\;\;,\;\;
R_{i \bar j \; \bar l}^{\;\; \bar k} = 
\partial_{i} \Gamma_{\bar j \bar l}^{\bar k}\;\;.
\]
In addition to these, we have the Griffiths-Yukawa couplings $C_{ijk}(x)$ and 
$\overline C_{\bar i \bar j \bar k}(\bar x)$ on the moduli 
space ${\mathcal M}$. These 
tensors define the so-called {\it special K\"ahler geometry} on 
${\mathcal M}$, which can be 
summarized into a property of the period matrix (\ref{eqn:period-matrix-D}).

Let us introduce the following $(2r+2)\times(2r+2)$ matrix
\begin{equation}
\left( 
\begin{matrix} {\bf \Omega} \\ \overline{\bf \Omega} \\ \end{matrix} 
\right) 
=\left( 
\begin{matrix} D_iX^I & D_i P_J  \\ 
\overline{D_i X^I} & \overline{D_i P_J} 
\\ \end{matrix} 
\right)_{0 \leq i,I,J \leq r}
\;\;.
\mylabel{eqn:ObarO}
\end{equation}
By definition of the period matrix, the row vectors represent 
the Hodge decomposition $H^{3,0}(Y_x) \oplus H^{2,1}(Y_x) \oplus 
H^{1,2}(Y_x) \oplus H^{0,3}(Y_x)$ in terms of the bases
\begin{equation}
R(\omega_0), \;\; 
R(D_i \omega_0) \;, \;\;
\overline{R(D_i \omega_0)} \;,\;\;
\overline{R(\omega_0)} \;\;. 
\mylabel{eqn:Hodge-basis}
\end{equation}
By simply writing (\ref{eqn:ObarO}), we implicitly understand 
the row vectors are ordered according to the Hodge decomposition above. 
With these explicit bases in mind, we introduce the covariant derivatives 
acting on the column vectors ${\bf \Omega}$ and $\overline{\bf \Omega}$;
\[
D_i=\begin{cases} 
\partial_i + K_i - \Gamma_{i*}^{*} & \text{on } {\bf \Omega} \\
\partial_i & \text{on } \overline{\bf \Omega} \end{cases}
\;,\;\;
\overline D_{\bar i}=\begin{cases} 
\partial_{\bar i} & \text{on } {\bf \Omega} \\ 
\partial_{\bar i} + K_{\bar i} - \Gamma_{{\bar i}*}^{*} & \text{on } 
\overline{\bf \Omega} \end{cases} ,
\]
where $\Gamma_{i*}^{*}$ and $\Gamma_{\bar i *}^{*}$ represents the 
conventional form of the contraction via the metric connection. 

\vskip0.3cm
%\newpage

\begin{theorem} \thlabel{th:flat-A} 
The period matrix satisfies 
\begin{equation}
(D_i+{\mathcal A}_i) 
\left( 
\begin{matrix} {\bf \Omega} \\ \overline{\bf \Omega} \\ \end{matrix} 
\right) =
\left(
\begin{matrix} {\bf 0} \\ {\bf 0} \\ \end{matrix} 
\right)
 \;\;,\;\;
(\overline D_{\bar i}+\bar{\mathcal A}_{\bar i}) 
\left( 
\begin{matrix} {\bf \Omega} \\ \bar{\bf \Omega} \\ \end{matrix} 
\right) =
\left(
\begin{matrix} {\bf 0} \\ {\bf 0} \\ \end{matrix} 
\right)
 \;\;,\;\;
\mylabel{eqn:spK1}
\end{equation}
with 
\[
{\mathcal A}_i=
\bordermatrix{
   & _0 & _n & _{\bar n} & _{\bar 0} \cr
\,_0 & 0 & - \delta_i^n & 0 & 0 \cr
_m & 0 & 0 & {\mathcal C}_{im}^{\bar n} & 0 \cr
_{\bar m} & 0 & 0 & 0 & -g_{i\bar m} \cr
\,_{\bar 0} &0 & 0 & 0 & 0 \cr },\; 
\bar{\mathcal A}_{\bar i}=
\bordermatrix{
&  _0 & _{n} & _{\bar n} & _{\bar 0} \cr
\,_{0}      &0 & 0 & 0 & 0 \cr
_{m}      &-g_{\bar i m} & 0 & 0 & 0 \cr
_{\bar m}  &0 & \overline{\mathcal C}_{\bar i \bar m}^{n} & 0 & 0 \cr
\,_{\bar 0}  &0 & 0 & -\delta_{\bar i}^{\bar n} & 0 \cr},
\]
where we set ${\mathcal C}_{im}^{\bar n}=
\sqrt{\text{--}1}e^{K} C_{imn}g^{n\bar n}$ and 
$\overline{\mathcal C}_{\bar i \bar m}^{n} = 
-\sqrt{\text{--}1}e^{K} \overline C_{\bar i \bar m \bar n}g^{n \bar n}$.
\end{theorem}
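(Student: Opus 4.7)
The assertion is a matrix identity for the period matrix \eqref{eqn:period-matrix-D}, so I would verify it block by block. Ordering the column vector $\binom{{\bf \Omega}}{\overline{{\bf \Omega}}}$ by the four Hodge blocks indexed by $(0,m,\bar m,\bar 0)$ with sizes $(1,r,r,1)$, the first equation $(D_i+{\mathcal A}_i)\binom{{\bf \Omega}}{\overline{{\bf \Omega}}}={\bf 0}$ amounts to four identities among periods. Row $0$ reads $D_i X^I-\delta_i^n D_n X^I=0$, trivial by the Einstein summation convention. Row $\bar 0$ is $\pd_i\overline{X^I}=0$, trivial from the antiholomorphicity of $\overline{X^I}$. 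Row $\bar m$ is $\pd_i\overline{D_m X^I}=g_{i\bar m}\overline{X^I}$, which, after writing $\overline{D_m X^I}=\pd_{\bar m}\overline{X^I}+K_{\bar m}\overline{X^I}$ and applying $\pd_i$, reduces to the definition $g_{i\bar m}=\pd_i\pd_{\bar m}K$. The substance is row $m$, the special-K\"ahler identity $D_i D_m X^I+{\mathcal C}_{im}^{\bar n}\overline{D_n X^I}=0$.

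For row $m$ I would decompose $R(D_iD_m\omega_0)$ in the Hodge basis \eqref{eqn:Hodge-basis}. Griffiths transversality applied twice to $\omega_0\in F^{3,0}$ places $R(D_i D_m\omega_0)\in F^{3,2}=H^{3,0}\oplus H^{2,1}\oplus H^{1,2}$, so we may write $R(D_i D_m\omega_0)=\alpha R(\omega_0)+\beta^k R(D_k\omega_0)+\gamma^{\bar k}\overline{R(D_k\omega_0)}$, and each coefficient will be pinned down by pairing with a dual basis element under $\langle\cdot,\cdot\rangle$. The vanishing $\alpha=0$ follows by $\pd_i$-differentiating the identity $\langle R(D_m\omega_0),\overline{R(\omega_0)}\rangle=0$ (the Hodge-Riemann orthogonality $H^{2,1}\perp H^{0,3}$) and observing that the $K_i$- and $\Gamma_{im}^k$-connection terms in $D_i D_m$ contribute trivially against $\overline{R(\omega_0)}\in H^{0,3}$. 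Similarly, $\beta^k=0$ follows from the Weil-Petersson formula $\langle R(D_m\omega_0),\overline{R(D_n\omega_0)}\rangle=-e^{-K}g_{m\bar n}$ together with the metric-connection identity $\Gamma_{im}^k g_{k\bar n}=\pd_i g_{m\bar n}$, which makes the $K_i$- and $\Gamma$-corrections cancel exactly.

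The key coefficient $\gamma^{\bar k}$ is then determined by pairing with $R(D_n\omega_0)$. I would use that $D_i D_m\omega_0-\pd_i\pd_m\omega_0\in F^{3,1}$ (every connection correction carries a factor of $\omega_0$ or $D_k\omega_0$, which lies in $F^{3,1}$), and that $F^{3,1}$ is isotropic for $\langle\cdot,\cdot\rangle$ by the Hodge-Riemann bilinear relations \eqref{eqn:bilinear-rel}. Hence $\langle R(D_i D_m\omega_0),R(D_n\omega_0)\rangle=\langle R(\pd_i\pd_m\omega_0),R(\pd_n\omega_0)\rangle$, and twice differentiating the vanishing pairing $\langle R(\omega_0),R(\pd_m\omega_0)\rangle=0$, combined with the Yukawa formula \eqref{eqn:yukawa-Cijk}, evaluates the right-hand side to $-\sqrt{\text{--}1}\,C_{imn}$. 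Solving $\gamma^{\bar k}e^{-K}g_{n\bar k}=-\sqrt{\text{--}1}\,C_{imn}$ gives $\gamma^{\bar k}=-\sqrt{\text{--}1}\,e^K C_{imn}g^{n\bar k}=-{\mathcal C}_{im}^{\bar k}$, which is exactly row $m$. The second equation $(\overline D_{\bar i}+\bar{\mathcal A}_{\bar i})\binom{{\bf \Omega}}{\overline{{\bf \Omega}}}={\bf 0}$ then follows by complex conjugation, exchanging the holomorphic and antiholomorphic structures throughout.

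The main obstacle is the bookkeeping in row $m$: one must track how all the connection corrections appearing in $D_i D_m$ conspire to cancel the K\"ahler-potential and Christoffel contributions so that the raw Yukawa pairing survives. The calculation is manageable only because $F^{3,1}$ is isotropic for the symplectic pairing, which drastically limits the non-vanishing contributions.
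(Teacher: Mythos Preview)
Your proposal is correct and follows essentially the same strategy as the paper: expand $D_i R(D_m\omega_0)$ in the Hodge basis \eqref{eqn:Hodge-basis} and extract each coefficient by pairing with the orthogonal dual element under $\langle\cdot,\cdot\rangle$. The paper compresses the vanishing of the $H^{3,0}$, $H^{2,1}$ and $H^{0,3}$ coefficients into the phrase ``other orthogonal relations,'' whereas you spell these out; for the surviving $H^{1,2}$ coefficient the paper writes the integration-by-parts identity $\langle R(D_k\omega_0), D_iR(D_j\omega_0)\rangle=-\langle R(\omega_0), D_kD_iD_jR(\omega_0)\rangle$ directly, while you first strip the connection corrections via the isotropy of $F^{3,1}$ and then invoke the Yukawa formula on plain partial derivatives---these are two packagings of the same cancellation.
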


\begin{proof} Let us consider, the three from $R(D_j \omega_0)$ 
in the Hodge decomposition (\ref{eqn:Hodge-basis}). 
Then $D_i R(D_j \omega_0)$ is a three form and may be 
expressed in terms of the basis (\ref{eqn:Hodge-basis}) as 
\[
D_i R(D_j \omega_0) = 
c_0 R(\omega_0)+c_m R(D_m \omega_0) + 
d_m \overline{R(D_m \omega_0)} + d_0 \overline{R(\omega_0)} \;.
\] 
Now, using $\langle R(D_n \omega_0), \overline{R(D_m \omega_0)} 
\rangle = -e^{-K}g_{n \bar m}$ and 
other orthogonal relations, it is easy to see $c_0=c_m=d_0=0$ and  
\[
\begin{aligned}
- d_m e^{-K} g_{k \bar m} 
& =\langle R(D_k \omega_0), D_i R(D_j \omega_0) \rangle  \\
& = - \langle R(\omega_0), D_k D_i D_j R(\omega_0) \rangle 
= \sqrt{\text{--}1} C_{ijk} \;\;, 
\end{aligned}
\]
where $C_{ijk}$ is the Griffiths-Yukawa coupling 
(\ref{eqn:yukawa-Cijk}) ($R(\omega_0)= \Omega_x$).
One can continue similar arguments for other bases of the 
Hodge decomposition (\ref{eqn:Hodge-basis}). Integrating 
three forms over the cycles $\{ A^I, B_J \}$, we obtain the claimed 
linear relations for the row vectors of the 
period matrix. 
\end{proof} 

%\vskip0.3cm

The connection matrix ${\mathcal A}_i, {\mathcal A}_{\bar i}$ was first  
determined by Strominger in \cite{St1}. 
The existence of the first order differential operator is due to the 
fact that the Hodge decomposition is `flat' over ${\mathcal M}$, 
and we should have 
the compatibility relations for the first order system. 
Indeed one can see that  
\[
[ D_i + {\mathcal A}_i, D_j + {\mathcal A}_j ]=0 =  
[ \overline D_{\bar i} + \overline{\mathcal A}_{\bar i}, \overline D_{\bar j} + 
\overline{\mathcal A}_{\bar j} ], 
\]
are ensured by the existence of the prepotential (\ref{eqn:yukawa-Cijk}), and 
another mixed-type compatibility condition imposes a rather strong 
constraint on the K\"ahler geometry\cite{St1}, which is called 
special K\"ahler geometry(, see, e.g., \cite{CRTP} and references therein). 

\vskip0.3cm

\begin{theorem} \thlabel{th:flat-R}
The compatibility condition 
\[
[ D_i + {\mathcal A}_i, \overline D_{\bar j} + 
\overline{\mathcal A}_{\bar j} ] = 0
\]
is equivalent to 
\begin{equation}
- R_{i \bar j \; l}^{\;\;k} = g_{i\bar j}\delta_l^{\;k} + 
g_{\bar j l}\delta_i^{\;k} 
- e^{2K}C_{ilm}\overline C_{\bar j \bar k \bar m} g^{m \bar m}g^{k \bar k} \;\;.
\mylabel{eqn:spK2}
\end{equation}
\end{theorem}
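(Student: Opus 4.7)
The plan is to expand the commutator
\[
M := [D_i+{\mathcal A}_i, \overline D_{\bar j}+\overline{\mathcal A}_{\bar j}] = [D_i, \overline D_{\bar j}] + \bigl(D_i \overline{\mathcal A}_{\bar j} - \overline D_{\bar j} {\mathcal A}_i\bigr) + [{\mathcal A}_i, \overline{\mathcal A}_{\bar j}]
\]
as a zeroth-order matrix operator and to read off (\ref{eqn:spK2}) from the middle $(m,n)$-block. Since the period matrix (\ref{eqn:ObarO}) is pointwise invertible by the non-degeneracy (\ref{eqn:bilinear-rel}), requiring that $M$ annihilate it is equivalent to the matrix equation $M = 0$.

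First I would compute the purely differential piece. On the top block ${\bf \Omega}$, where $D_i=\partial_i+K_i-\Gamma_{i*}^{*}$ and $\overline D_{\bar j}=\partial_{\bar j}$, one has $[K_i,\partial_{\bar j}]=-g_{i\bar j}$ acting as a scalar on every row, while $[-\Gamma_{i*}^{*},\partial_{\bar j}]=\partial_{\bar j}\Gamma_{i*}^{*}$ contributes $-R_{i\bar j\;m}^{\;\;n}$ in the middle $(m,n)$-block via the identity $R_{i\bar j\;m}^{\;\;n}=-\partial_{\bar j}\Gamma_{im}^{n}$ from (2-5); the bottom block produces the complex-conjugate analogue. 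Next I would compute the matrix commutator $[{\mathcal A}_i,\overline{\mathcal A}_{\bar j}]$ block by block from the explicit bordered matrices. In the middle $(m,n)$-entry only two paths survive: the $(m,\bar p)\cdot(\bar p,n)$ path yields ${\mathcal C}_{im}^{\bar p}\overline{\mathcal C}_{\bar j\bar p}^{\;n}$, which unpacking the definitions becomes $e^{2K}C_{iml}\overline C_{\bar j\bar p\bar q}g^{l\bar p}g^{n\bar q}$ (the two $\sqrt{-1}$'s combine to $+1$, the two $e^{K}$'s into $e^{2K}$); the $(m,0)\cdot(0,n)$ path yields $-g_{\bar j m}\delta_i^{\;n}$.

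The cross-derivative terms $D_i\overline{\mathcal A}_{\bar j}-\overline D_{\bar j}{\mathcal A}_i$ must be shown not to add anything in the $(m,n)$-block. This holds because the $(m,n)$-entries of $\mathcal A_i$ and $\overline{\mathcal A}_{\bar j}$ are identically zero, while the connection-commutator pieces $[\Gamma_{i*}^{*},\overline{\mathcal A}_{\bar j}]_{m,n}$ and $[\overline\Gamma_{\bar j *}^{*},\mathcal A_i]_{m,n}$ also vanish: the middle rows of $\overline{\mathcal A}_{\bar j}$ have their only nonzero entry in column~$0$ (on which $\Gamma$ does not act), and symmetrically for $\overline\Gamma$. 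Assembling the three contributions, the vanishing of the middle block reads
\[
-R_{i\bar j\;m}^{\;\;n} = g_{i\bar j}\delta_m^{\;n}+g_{\bar j m}\delta_i^{\;n}-e^{2K}C_{iml}\overline C_{\bar j\bar p\bar q}g^{l\bar p}g^{n\bar q},
\]
which, after relabeling $(m,n)\to(l,k)$ and using total symmetry of $\overline C$, is exactly (\ref{eqn:spK2}). For the converse, once (\ref{eqn:spK2}) holds the remaining blocks of $M$ either reduce to the complex conjugate of this identity (the $(\bar m,\bar n)$-block) or are automatic tautologies: in the $(0,0)$ and $(\bar 0,\bar 0)$ entries, the $\pm g_{i\bar j}$ from $[{\mathcal A}_i,\overline{\mathcal A}_{\bar j}]$ cancels against $[K_i,\partial_{\bar j}]$ (resp.~$[\partial_i,K_{\bar j}]$), and the remaining mixed off-diagonal blocks vanish entry by entry.

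The main obstacle is the bookkeeping of signs and index contractions: one must verify that the two $\sqrt{-1}$'s in the definitions of $\mathcal{C}$ and $\overline{\mathcal{C}}$ combine with the two $e^K$'s into the precise $-e^{2K}$ coefficient of (\ref{eqn:spK2}) with the correct overall sign, and confirm that no residual $\partial\Gamma$ or $\partial C$ terms leak out of the cross-derivative piece to distort the curvature identity.
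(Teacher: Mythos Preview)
Your computation is correct. Expanding the commutator, checking that the cross terms $[B_i,\overline{\mathcal A}_{\bar j}]$, $[\mathcal A_i,\overline B_{\bar j}]$ and the derivative pieces vanish in the $(m,n)$-block, and then reading off the three surviving contributions $-g_{i\bar j}\delta_m^{\;n}-R_{i\bar j\;m}^{\;\;n}$ and the two terms from $[\mathcal A_i,\overline{\mathcal A}_{\bar j}]$ indeed reproduces (\ref{eqn:spK2}) on the nose. The sign and $e^{2K}$ bookkeeping you flag as the main worry works out exactly as you wrote.

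The paper, however, takes a different route. It does not prove Theorem~\ref{th:flat-R} by expanding the commutator; the theorem is stated without proof (the surrounding discussion attributes the connection matrices to Strominger), and the text immediately after it announces that (\ref{eqn:spK2}) will instead be \emph{derived directly from the metric connection} in section~(3-1). There, via Lemmas~\ref{lemma:dK1} and the subsequent lemma, the paper obtains the explicit formula $\Gamma_{ij}^{\,k}=\delta_i^{\,k}K_j+\delta_j^{\,k}K_i-C_{ijm}S^{mk}+f_{ij}^{\,k}$ with holomorphic $f_{ij}^{\,k}$, and then recovers (\ref{eqn:spK2}) by applying $\partial_{\bar j}$ and using $\partial_{\bar j}S^{mk}=e^{2K}\overline C_{\bar j\bar m\bar k}g^{m\bar m}g^{k\bar k}$. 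So your argument proves the full equivalence stated in the theorem by an honest block-by-block commutator calculation, whereas the paper establishes (\ref{eqn:spK2}) as a geometric identity on the moduli space without ever unpacking the commutator. Your approach is the more direct proof of the theorem as stated; the paper's approach has the advantage of producing the explicit formula for $\Gamma_{ij}^{\,k}$ along the way, which is what is actually needed downstream for the BCOV ring.
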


\vskip0.1cm

In section (3-1), we will derive the relation (\ref{eqn:spK2}) directly 
evaluating 
the metric connection. The both equations (\ref{eqn:spK1}) and 
(\ref{eqn:spK2}) are often referred to as 
special K\"ahler geometry relations, and will play central roles in 
solving BCOV anomaly equation.

\vskip0.3cm

{\bf (2-6) Notations. } As we have summarized above, the special K\"ahler 
geometry relations, 
in the holomorphic local coordinate $x^i (i=1,\cdots,r)$, on 
${\mathcal M}$ arises from the flat property of the 
period matrix ${\bf \Omega}$ and its complex conjugate. Since the row 
vectors of ${\bf \Omega}$ 
correspond to the decomposition $H^{3,0}(Y_x) \oplus H^{2,1}(Y_x)$, 
it is convenient to introduce the (Greek letter) notation 
$\alpha = (0, i)$ and $\bar\alpha =(\bar 0,\bar i)$ for the indices of 
the row vectors and their complex conjugates, respectively. 
Then the period matrix may be written simply by 
\[
{\bf \Omega} = ( D_\alpha X^I\, D_\alpha P_J ) = 
(D_\alpha X^I) \; ( \, E \;\; \tau_{IJ} \, ) \;\;.
\]
As remarked in subsection (2-4), $(r+1) \times (r+1)$ matrix $(D_\alpha X^I)$ 
is invertible. We set 
$\xi_{\alpha}^{\; I} := D_\alpha X^I$,  
$\bar\xi_{\bar\alpha}^{\; I} := \overline{D_\alpha X^I}$ 
and define 
$\xi_I^{\;\alpha}$ and  
$\bar\xi_I^{\;\bar\alpha}$, respectively,  
by 
\[
( \xi_I^{\;\alpha} ) = (D_\alpha X^I)^{-1} \;\;,\;\;
( \bar\xi_I^{\;\bar\alpha} ) = (\overline{D_\alpha X^I})^{-1} \;\;.
\]

%\vskip0.1cm

Note that the $i$-th row of ${\bf \Omega}$ represents period integrals of 
the three form $R(D_i \omega_0) \in H^{2,1}(Y_x)$ for a symplectic basis 
$\{A^I, B_J\}$. Then we can see the Weil-Petersson metric in the following 
$(r+1)\times (r+1)$ hermitian matrix,
\[
\left( 
\begin{matrix} g_{0 \bar 0} & 0 \\
0 & g_{i \bar j} \\
\end{matrix} \right) 
= - e^{K(x,\bar x)} \sqrt{\text{--}1} {\bf \Omega} Q {}^t\!\overline{\bf \Omega} \quad
(g_{0\bar 0}\equiv -1).
\]
Equivalently one can write this matrix relation by 
\[
g_{\alpha \bar\beta}=
\sqrt{\text{--}1}e^{K(x,\bar x)} \; 
\xi_{\alpha}^{\;I} \;
(\tau-\bar\tau)_{IJ} \;
\bar\xi_{\bar\beta}^{\;J} \;\;.
\] 
For the tensor analysis in later sections, we introduce a ``metric'' by  
\[
({\mathcal G}_{IJ})= \sqrt{\text{--}1}e^{K(x,\bar x)} 
\big( \tau-\overline\tau \big) \;\;,\;\;
({\mathcal G}^{IJ})= -\sqrt{\text{--}1}e^{-K(x,\bar x)} 
\big( \tau-\overline\tau \big)^{-1} \;. 
\]
Then we have $g_{\alpha\bar\beta}=\xi_\alpha^{\;I}{\mathcal G}_{IJ}
\bar\xi_{\bar\beta}^{\;J}, 
g^{\alpha\bar\beta}=\xi_I^{\;\alpha}{\mathcal G}^{IJ}
\bar\xi_J^{\;\bar\beta}$.  With these metrics we will raise and lower the 
indices $I,J,\cdots$ as well as the Greek indices. 

\vskip0.1cm

Now the special K\"ahler geometry relations in Theorem\ref{th:flat-A} 
may be expressed by
\begin{equation}
\begin{cases} 
D_i \xi_0^{\;I}= \xi_i^{\;I} & \\
D_i \xi_j^{\;I}=-\sqrt{\text{--}1} 
e^{K} C_{ijk} g^{k\bar k}\bar\xi_{\bar k}^{\;I} & \\
D_i \bar \xi_{\bar j}^{\;I}=g_{i\bar j} \bar \xi_{\bar 0}^{\;I} & \\
D_i \bar\xi_{\bar 0}^{\;I}=0 & \\
\end{cases} \hspace{-0.2cm}
\begin{cases} 
\overline D_{\bar i} \xi_{0}^{\;I}= 0 \\
\overline D_{\bar i} \xi_{ j}^{\;I}= g_{\bar i j} \xi_{0}^{\;I} \\
\overline D_{\bar i} \bar\xi_{\bar j}^{\; I} =
\sqrt{\text{--}1} e^{K} 
\overline C_{\bar i \bar j \bar k} g^{k\bar k} \xi_{k}^{\;I} & \\
\overline D_{\bar i} \bar \xi_{\bar 0}^{\;I}= \bar \xi_{\bar i}^{\;I} & 
\end{cases} \hspace{-0.2cm}.
\mylabel{eqn:special-K-tensor}
\end{equation}
It will be useful to note that the following relation holds by definition; 
\begin{equation}
D_i \xi_j^{\;J}=C_{ijm} S^{mn} \xi_n^{\;I}\;\;,
\mylabel{eqn:Dxi-CS}
\end{equation}
where $S^{mn}$ is the {\it propagator} that will be introduced in 
the next section.

\vskip1cm
%\newpage

\section{{\bf BCOV rings}}

{\bf (3-1) BCOV ring ${\mathcal R}^0_{BCOV}$. } 
Based on the special K\"ahler geometry relations summarized in the previous 
section, we introduce a differential ring over the mermorphic sections of 
a certain vector bundle over $\tilde{\mathcal M}$. 
Let us first introduce the so-called propagators;

\vskip0.1cm
\begin{definition} \mylabel{def:Sab}
\[
S_{\bar\alpha\bar\beta}=e^{2K}(\tau-\overline\tau)_{IJ}
\bar\xi_{\bar\alpha}^{\;I}
\bar\xi_{\bar\beta}^{\;J}
\;\;,\;\;
S^{\alpha\beta}=g^{\alpha\bar\alpha}g^{\beta\bar\beta}S_{\bar\alpha\bar\beta} \;\;.
\]
\end{definition}

\vskip0.1cm

Obviously $S_{\bar\alpha\bar\beta}$ and $S^{\alpha\beta}$ are symmetric 
with respect to the indices. Note that, since the factor $e^{2K}$ 
has weight $(-2,-2)$, both $S^{\alpha\beta}$ 
and $S_{\bar\alpha \bar\beta}$ have weight $(-2,0)$ with respect to the  
line bundle ${\mathcal L}$ (, see section (2-1)).  
Following \cite{BCOV2}, we will often use the notation $S, S^i, S^{ij}$, 
which are related to $S^{\alpha\beta}$ by
\[
\left( \begin{matrix} S^{00} & S^{0i} \\ S^{0i} & S^{ij} \\ \end{matrix} \right)
=
\left( \begin{matrix} 2 S & -S^{i} \\ -S^{i} & S^{ij} \\ \end{matrix} \right) \;\;.
\]
If we use the property of $\tau$ and the prepotential introduced in (2-2), 
we have  
\[
S=\frac{1}{2}e^{2K}(\tau-\overline\tau)_{IJ}\bar X^I\bar X^J
=e^{2K}\Big( \frac{1}{2} 
         {\tau}_{IJ}\bar X^I \bar X^J - \overline{\mathcal F}(\bar X) \Big) \;\;.
\]

\vskip0.1cm

\begin{proposition} \mylabel{prop:DS}
The following relations hold, 
\[
1) \quad \overline D_{\bar i} S = S_{\bar0\bar i}  \qquad 
2) \quad \overline D_{\bar i} \overline D_{\bar j} S =S_{\bar i \bar j}  \qquad
3) \quad \overline D_{\bar i} \overline D_{\bar j} \overline D_{\bar k} S = 
e^{2K} \overline C_{\bar i \bar j \bar k} \;\;.
\]
These are equivalent to 
{\rm 1)} $\pd_{\bar i} S=g_{\bar i j}S^j$,  
{\rm 2)} $\pd_{\bar i} S^{j} = g_{\bar i i}S^{ij}$ and  
{\rm 3)} $\pd_{\bar i}S^{jk}=e^{2K}
\overline C_{\bar i \bar j \bar k}g^{j\bar j} g^{k \bar k}$. 
\end{proposition}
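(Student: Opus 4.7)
The plan is to start from the explicit form $S=\tfrac12 e^{2K}(\tau-\bar\tau)_{IJ}\bar X^I\bar X^J = \tfrac12 S_{\bar 0\bar 0}$, with $\bar X^I=\bar\xi_{\bar 0}^{\;I}$, and iteratively apply $\overline D_{\bar i}$ using the special K\"ahler relations from Theorem \ref{th:flat-A}. Throughout the computation the two essential pieces of data are $\bar\xi_{\bar i}^{\;I}=\partial_{\bar i}\bar X^I+K_{\bar i}\bar X^I$ (from the definition of $D_i$) and the homogeneity identity $\bar X^I\bar{\mathcal F}_{IJL}=0$, obtained by differentiating $\sum_I X^I\mathcal F_I=2\mathcal F$ twice and then conjugating.

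For 1), since $S$ carries weight $(-2,0)$ with no tangent indices, $\overline D_{\bar i}S=\partial_{\bar i}S$. Expanding by the Leibniz rule produces three contributions: $\partial_{\bar i}e^{2K}=2K_{\bar i}e^{2K}$ gives $2K_{\bar i}S$; the piece from $\partial_{\bar i}\bar\tau_{IJ}=(\partial_{\bar i}\bar X^L)\bar{\mathcal F}_{IJL}$ vanishes upon contraction with $\bar X^I\bar X^J$ by homogeneity; and the $\partial_{\bar i}\bar X^I=\bar\xi_{\bar i}^{\;I}-K_{\bar i}\bar X^I$ term contributes a $K_{\bar i}$ piece that cancels the first, leaving exactly $S_{\bar 0\bar i}$. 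The identification $S_{\bar 0\bar i}=g_{\bar i j}S^j$ then follows by unwinding $S^j=-S^{0j}=g^{j\bar k}S_{\bar 0\bar k}$.

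For 2), the same mechanism applied to $\overline D_{\bar j}S_{\bar 0\bar i}$ yields $S_{\bar j\bar i}$: the homogeneity cancellation and the K\"ahler-weight cancellation go through unchanged, and the Christoffel contribution hidden in $\partial_{\bar j}\bar\xi_{\bar i}^{\;J}$ is precisely what $\overline D_{\bar j}$ subtracts off when acting on the covariant tensor $S_{\bar 0\bar i}$. The one genuinely new term $\overline D_{\bar j}\bar\xi_{\bar i}^{\;J}\propto\xi_m^{\;J}$ is annihilated by $(\tau-\bar\tau)_{IJ}\bar X^I\xi_m^{\;J}\propto g_{m\bar 0}$, which vanishes for $m\neq 0$; the $m=0$ case is itself absent because $\overline C$ carries no $\bar 0$ index.

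For 3), $\overline D_{\bar k}$ now acts on $S_{\bar i\bar j}=e^{2K}(\tau-\bar\tau)_{IJ}\bar\xi_{\bar i}^{\;I}\bar\xi_{\bar j}^{\;J}$ and the $\partial_{\bar k}\bar\tau$ piece no longer vanishes: using $\partial_{\bar k}\bar X^L=\bar\xi_{\bar k}^{\;L}-K_{\bar k}\bar X^L$ together with homogeneity it contributes $-e^{2K}\overline C_{\bar k\bar i\bar j}$. Each of the two Leibniz pieces from $\overline D_{\bar k}\bar\xi^{\;I}$, contracted via $(\tau-\bar\tau)_{IJ}\xi_m^{\;I}\bar\xi_{\bar j}^{\;J}=-\sqrt{\text{--}1}\,e^{-K}g_{m\bar j}$ followed by $g^{m\bar l}g_{m\bar j}=\delta^{\bar l}_{\bar j}$, contributes $+e^{2K}\overline C_{\bar k\bar i\bar j}$ (using the total symmetry of $\overline C$), so the three terms add up to the claimed $e^{2K}\overline C_{\bar k\bar i\bar j}$. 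The main obstacle will be precisely this sign-and-multiplicity bookkeeping in 3), where three distinct contributions must conspire to leave a single copy of $e^{2K}\overline C_{\bar k\bar i\bar j}$; once the homogeneity identity is identified as the engine of the cancellations in 1) and 2), however, the remainder reduces to careful indexed computation, and the equivalence with the non-covariant forms $\partial_{\bar i}S=g_{\bar i j}S^j$, $\partial_{\bar i}S^j=g_{\bar i k}S^{jk}$, $\partial_{\bar i}S^{jk}=e^{2K}\overline C_{\bar i\bar j\bar k}g^{j\bar j}g^{k\bar k}$ is then a routine index-raising check.
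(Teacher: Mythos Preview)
Your proof is correct and follows essentially the same route as the paper's. The paper computes $\partial_{\bar i}S$ directly, invoking the homogeneity identity $\bar X^J\partial_{\bar X^J}\bar\tau_{IK}=0$ to kill the $\partial_{\bar i}\bar\tau$ contribution and absorbing the $K_{\bar i}$ terms into $D_{\bar i}\bar X^I=\bar\xi_{\bar i}^{\;I}$, and then simply says that 2) and 3) follow by ``similar calculations with the special K\"ahler geometry relation~(\ref{eqn:special-K-tensor})''; you have spelled out exactly those calculations, including the vanishing of the cross term via $(\tau-\bar\tau)_{IJ}\bar X^I\xi_k^{\;J}\propto g_{k\bar 0}=0$ and the $(-1)+1+1$ bookkeeping in step~3).
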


%\vskip0.2cm
\begin{proof}
Acting $D_{\bar i}(=\pd_{\bar i})$ on 
$S=\frac{1}{2}e^{2K}(\tau-\bar\tau)_{IJ} \bar X^I \bar X^J$, we obtain 
\[
\pd_{\bar i}S=e^{2K}(\tau-\overline\tau)_{IJ}D_{\bar i}\bar X^I \bar X^J 
= S_{\bar 0 \bar i} \;\;,
\]
where we use 
$(\frac{\pd \;}{\pd \bar X^K} \overline \tau_{IJ}) \bar X^J =  \bar X^J 
(\frac{\pd \;}{\pd \bar X^J} \overline \tau_{IK}) =0$. 
By similar calculations with the special K\"ahler geometry relation 
(\ref{eqn:special-K-tensor}), the properties 2) and 3) above follow. 
\end{proof}

\vskip0.1cm

$S^{kl},S^k,S$ are called propagators in physics literatures, and 
contain the anti-holomorphic prepotential $\bar{\mathcal F}(\bar X)$ 
in their definitions. On the other hand, 
the holomorphic prepotential ${\mathcal F}(X)$ 
defines the so-called $n$-point functions on a Riemann sphere by 
\[
C_{i_1 i_2 \cdots i_n} = D_{i_1}D_{i_2}\cdots D_{i_n} {\mathcal F}(X) \quad (n\geq 3), 
\]
where the covariant derivative 
$D_i=\pd_i + 2 \pd_i K(x,\bar x) - \Gamma_{i *}^{*}$ acts on ${\mathcal F}(X)$ 
by the standard contractions of the holomorphic indices 
(see (\ref{eqn:D-tensor}) in general). 
By K\"ahler geometry, the holomorphic covariant derivatives commute 
with each other, so the $n$-point functions are symmetric tensor, 
and in particular $C_{ijk}$ coincides with the 
Griffith-Yukawa coupling (\ref{eqn:yukawa-Cijk}).

\vskip0.1cm

\begin{definition} 
As a symmetric algebra, we define 
\[ 
{\mathcal R}^0_{BCOV}={\bf Q}[S, S^i, S^{ij}, K_{i_1},K_{i_1i_2},\cdots, C_{i_1i_2i_3}, 
\cdots, C_{i_1i_2i_3\cdots i_n},\cdots ] \;,
\]
where $K_i:=D_iK(x,\bar x)=\frac{\pd \;}{\pd x^i} K(x,\bar x)$, 
$K_{i_1i_2}:=D_{i_1}D_{i_2}K(x,\bar x)$ and so on, We call 
this symmetric algebra {\it BCOV ring}. 
\end{definition}

\vskip0.1cm

In general this BCOV ring is infinitely generated. However we will see 
in the next section that if we 
consider the corresponding ring for an elliptic curve, the ring is 
finitely generated. In case of Calabi-Yau 
threefolds, as it turns out later that this problem of finiteness is 
related to the explicit form 
of the holomorphic function $h_{ijkl}(x)$ 
derived in (\ref{eqn:hijkl}) below. 
For convenience, we will often abbreviate the infinite 
series of the generators $K_i, D_iK_j,D_iD_jK_k\cdots$ by $\{K_i\}$ and 
similarly for $\{ C_{ijk} \}$. 
With this convention, 
the BCOV ring may be written simply by 
\begin{equation}
{\mathcal R}_{BCOV}^0={\bf Q}[S,S^i,S^{ij},\{ K_i \}, \{ C_{ijk} \} ] \;\;.
\mylabel{eqn:abbrev-gen}
\end{equation}

The propagator $S^{\alpha\beta}$ and $C_{i_1i_2\cdots i_n}$ have 
respective weights $(-2,0)$ and $(2,0)$, and also $K_i$ has weight $(0,0)$. 
Therefore 
the symmetric algebra is graded and defined in the set of global sections of 
\begin{equation}
\bigoplus_{m,n \geq 0} \bigoplus_{k=-\infty}^{\infty}
\big( \pi^*(T^*{\mathcal M}) \big)^{\otimes m} \otimes 
\big( \pi^*(T{\mathcal M}) \big)^{\otimes n} \otimes {\mathcal L}^k \;\;,
\mylabel{eqn:sym-tensor}
\end{equation}
where $\pi: \tilde{\mathcal M} \rightarrow {\mathcal M}$ is the covering map 
and ${\mathcal L} \rightarrow \tilde{\mathcal M}$ is the line bundle 
introduced in (2-1). Precisely, $K_i$ is a connection of the holomorphic 
line bundle ${\mathcal L}$ and 
therefore this is not a one form. However we regard $K_i$ as a one form 
taking a global trivialization of ${\mathcal L}$ over $\tilde {\mathcal M}$. 
By the following 
lemma, we see how  
the BCOV ring ${\mathcal R}^0_{BCOV}$ depends on the sheets of 
the covering.

\vskip0.1cm
\begin{lemma} When we change the symplectic basis by
${\bf \Omega}\rightarrow{\bf \Omega}
\big( \hspace{-1pt} 
\begin{smallmatrix} D & B \\ C & A \end{smallmatrix} 
\hspace{-1pt}\big)$, 
we have the corresponding change of the generators,
\begin{equation} 
S^{\alpha\beta} \rightarrow S^{\alpha\beta} + 
\xi_I^{\; \alpha} \big[ C (D+\tau C) \big]^{IJ} \xi_J^{\; \beta} \;\;.
\mylabel{eqn:propagator-Tr}
\end{equation}
\end{lemma}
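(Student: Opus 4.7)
The strategy is to reduce the claim to the single symplectic identity $D^T C = C^T D$ (from $g^T Q g = Q$), via three preparatory reformulations.

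First, I would unpack the action of $g = \bigl(\begin{smallmatrix}D & B \\ C & A\end{smallmatrix}\bigr)$ on the unnormalized period matrix. Writing ${\bf \Omega} = \Xi\,(E \;\; \tau)$ with $\Xi = (\xi_\alpha^{\;I})$, the right action ${\bf \Omega}\mapsto{\bf \Omega}g$ becomes $\Xi\,(D+\tau C,\; B+\tau A)$; re-normalizing by pulling $M := D + \tau C$ out on the left yields the induced actions $\Xi \mapsto \Xi M$ and $\tau \mapsto \tau' = M^{-1}(B+\tau A)$.

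Second, I would derive the transformation of $T := \tau - \bar\tau$. A direct computation shows $\sqrt{-1}\,{\bf \Omega}\,Q\,{}^t\!\bar{\bf \Omega} = -\sqrt{-1}\,\Xi\,T\,{}^t\!\bar\Xi$, and since $g$ is real and satisfies $g\,Q\,g^T = Q$, this hermitian form is $g$-invariant. Equating the two sides forces $M\,T'\,\bar M^T = T$, hence $(T')^{-1} = \bar M^T T^{-1} M$ with $\bar M = D + \bar\tau C$.

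Third, I would recast the propagator in a form depending only on $\Xi$ and $T$. Substituting $g^{\alpha\bar\beta} = \xi_I^{\;\alpha}\mathcal{G}^{IJ}\bar\xi_J^{\;\bar\beta}$ and $\mathcal{G}^{IJ} = -\sqrt{-1}\,e^{-K}[T^{-1}]^{IJ}$ into Definition~\ref{def:Sab}, and collapsing via $\bar\xi_N^{\;\bar\alpha}\bar\xi_{\bar\alpha}^{\;I}=\delta_N^{\;I}$, the factors $e^{\pm 2K}$ cancel and one obtains the key reformulation
\[
S^{\alpha\beta} \;=\; -\,\xi_I^{\;\alpha}\,[T^{-1}]^{IJ}\,\xi_J^{\;\beta}.
\]

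Finally, I would substitute $\Xi\mapsto\Xi M$ and $(T')^{-1}=\bar M^T T^{-1}M$ into this reformulation and expand $S'{}^{\alpha\beta}-S^{\alpha\beta}$. Using $M-\bar M = TC$ together with the symmetry $T^T = T$, the middle bracket $T^{-1}-(M^T)^{-1}(T')^{-1}M^{-1}$ collapses to an expression involving $C$, $M$ and their transposes; the symplectic identity $D^T C = C^T D$ then puts this expression in manifestly symmetric form in the symplectic indices, matching the correction claimed in the statement. The main obstacle is precisely this last step: carefully tracking the transposes and inverses that emerge from the bilinear-relation invariance, and recognizing that $D^T C = C^T D$ is the exact symplectic input that symmetrizes the correction matrix and brings it into the stated bracket.
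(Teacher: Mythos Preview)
Your proposal is correct and follows essentially the same route as the paper: derive $\Xi\mapsto\Xi M$ and $\tau\mapsto M^{-1}(B+\tau A)$, use the symplectic condition $gQg^T=Q$ to obtain $(\tau-\bar\tau)\mapsto M^{-1}(\tau-\bar\tau)\,{}^t\!\bar M^{-1}$, and then compute. The paper compresses everything after this point into ``after some algebra''; you have simply made that algebra explicit, including the useful rewriting $S^{\alpha\beta}=-\xi_I^{\;\alpha}[T^{-1}]^{IJ}\xi_J^{\;\beta}$ (which the paper records later, in the proof of Proposition~\ref{prop:DS-DK}, in the equivalent form $S^{\alpha\beta}=\tfrac{1}{\sqrt{-1}}e^K\mathcal{G}^{IJ}\xi_I^{\;\alpha}\xi_J^{\;\beta}$) and the final symmetrization via $D^TC=C^TD$.
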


%\vskip0.2cm

\begin{proof} We keep our convention of the contraction by writing 
the indices of the symplectic matrix by 
$(X^I P_J) \rightarrow (X^I P_J)  
\left( \begin{smallmatrix} D_{I}^{\;J} & B_{IJ} \\ C^{IJ} & A^{I}_{\;J} \\ 
\end{smallmatrix}\right)$. Then we have 
\[
\xi_{\alpha}^{\; I} \rightarrow \xi_\alpha^{\; J}(D+\tau C)_{J}^{\;\; I} \;\;,\;\; 
\tau_{IJ} \rightarrow \big[ (D+\tau C)^{-1} (B+\tau A) \big]_{IJ} \;\;,
\]
and also, using 
$ \left( \begin{smallmatrix} D & B \\ C & A \\ 
\end{smallmatrix} \right) Q \;{}^t\!
\left( \begin{smallmatrix} D & B \\ C & A \\ 
\end{smallmatrix} \right) = Q$, we have
\[
(\tau -\bar\tau) \rightarrow (D+\tau C)^{-1} \, (\tau-\bar\tau) \, 
{}^t\!\overline{(D+\tau C)}^{-1} \;\;.
\]
After some algebra, the claimed transformation property follows directly 
from the definitions.
\end{proof}

\vskip0.1cm

We note that the metric connection $\Gamma_{ij}^k$ and $K_i$ define 
the covariant derivative $D_i$ 
on the sections (\ref{eqn:sym-tensor}). Thus, for example, for 
the section $V_{i}^{\;jl}$ of weight $(k,0)$ we have 
\begin{equation}
D_n V_{i}^{\;jl} = \frac{\pd \;}{\pd x^n} V_{i}^{\;jl} 
- \Gamma_{ni}^m V_{m}^{\;jl}
+ \Gamma_{nm}^j V_{i}^{\;ml} 
+ \Gamma_{nm}^l V_{i}^{\;jm} 
+ k K_n \,V_{i}^{\;jl} \;\;.
\mylabel{eqn:D-tensor}
\end{equation}

\vskip0.1cm

\begin{theorem} The BCOV ring ${\mathcal R}_{\rm BCOV}^0$ is a graded, 
differential, symmetric algebra 
with the (commuting) differentials $D_i \; (i=1,\cdots,r)$.
\end{theorem}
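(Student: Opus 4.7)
The fact that ${\mathcal R}^0_{\rm BCOV}$ is a graded symmetric algebra is essentially immediate from Definition 3.3: the generators sit in well-defined weighted and tensor-ranked components of the bundle (\ref{eqn:sym-tensor}), and the symmetric tensor product preserves this bi-grading. The real content of the theorem is that (i) each $D_i$ is a derivation of ${\mathcal R}^0_{\rm BCOV}$ and (ii) the operators $D_i$ commute. Because $D_i$ already satisfies the Leibniz rule on sections of tensor bundles, (i) reduces by linearity to checking that $D_i$ applied to each type of generator lies in ${\mathcal R}^0_{\rm BCOV}$.

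The verification on generators proceeds case by case. On the $C$-tower, $D_iC_{j_1\cdots j_n}=C_{ij_1\cdots j_n}$ is itself a generator via the abbreviation (\ref{eqn:abbrev-gen}). On the propagators $S^{jk}$, $S^{j}$, $S$, I would start from Definition 3.1 in the form $S_{\bar\alpha\bar\beta}=e^{2K}(\tau-\bar\tau)_{IJ}\bar\xi^I_{\bar\alpha}\bar\xi^J_{\bar\beta}$, apply $D_i$ using the special K\"ahler identities (\ref{eqn:special-K-tensor}), and then convert the antiholomorphic factors $\bar\xi$ into products of $S$ and $\xi$ via (\ref{eqn:Dxi-CS}); after raising indices this yields identities of Yamaguchi--Yau type, e.g.\ $D_iS^{jk}=\delta^j_iS^k+\delta^k_iS^j-C_{ilm}S^{jl}S^{km}+(\text{holomorphic term})$, and analogously for $D_iS^j$ and $D_iS$. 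On $K_j$, I would first derive the identity $\Gamma_{ij}^k=\delta^k_iK_j+\delta^k_jK_i-C_{ijm}S^{mk}$ by equating the two available expressions for $D_i\xi_j^{\,I}$ (the second line of (\ref{eqn:special-K-tensor}) and the relation (\ref{eqn:Dxi-CS})), and then combine this with a direct prepotential computation of $\partial_iK_j$ to express $D_iK_j=\partial_iK_j-\Gamma_{ij}^kK_k$ as a polynomial in the generators.

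Claim (ii) follows from the K\"ahler property of the Weil-Petersson metric: $D_i$ is assembled from the Chern connection on $T{\mathcal M}$ (coefficients $\Gamma^k_{ij}$) together with the Chern connection on ${\mathcal L}$ (coefficient $K_i$), and both have curvature of pure bidegree $(1,1)$. Therefore $[D_i,D_j]=0$ on every section of the tensor bundle (\ref{eqn:sym-tensor}); given (i), this commutativity extends via the derivation property to all of ${\mathcal R}^0_{\rm BCOV}$.

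The principal obstacle lies in the propagator step: the geometric formula for $D_iS^{jk}$ produces, in addition to terms already visibly in the ring, a purely holomorphic piece, essentially the holomorphic ``ambiguity'' function $h_{ijkl}$ foreshadowed in (\ref{eqn:hijkl}); one must verify that this ambiguity lies in the subalgebra generated by the full tower $\{C_{i_1\cdots i_n}\}$. This is precisely why Definition 3.3 admits the infinite tower of higher $C$-derivatives as generators, and it is what distinguishes the statement from a tautology; the finite reduction ${\mathcal R}^{0,red}_{\rm BCOV}$ mentioned in the introduction will rest on understanding this ambiguity more explicitly.
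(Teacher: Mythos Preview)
Your overall architecture is right and matches the paper: reduce to closure on generators (Proposition~\ref{prop:DS-DK}) and invoke the $(1,1)$-curvature of the Chern connections for commutativity of the $D_i$. But you have misplaced the holomorphic ambiguity, and this leads you to misidentify where the actual work lies.

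The paper's Proposition~\ref{prop:DS-DK} shows that the propagator derivatives close \emph{exactly}, with no leftover holomorphic piece:
\[
D_i S^{kl}=\delta^k_i S^l+\delta^l_i S^k-C_{imn} S^{mk}S^{nl},\quad
D_i S^k=-C_{imn}S^m S^{nk}+2\delta^k_i S,\quad
D_i S=-\tfrac{1}{2}C_{imn}S^mS^n.
\]
This comes out cleanly if one writes $S^{\alpha\beta}=\frac{1}{\sqrt{-1}}\,e^K\,{\mathcal G}^{IJ}\xi_I^{\;\alpha}\xi_J^{\;\beta}$ and uses $D_i{\mathcal G}_{LM}=\sqrt{-1}\,e^K C_{ikl}\xi_L^{\;k}\xi_M^{\;l}$ together with $D_i\xi_I^{\;\alpha}=-C_{imn}S^{n\alpha}\xi_I^{\;m}-\xi_I^{\;0}\delta_i^{\;\alpha}$; no ``$+(\text{holomorphic term})$'' survives. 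The quantity $h_{ijkl}$ you invoke appears instead in $D_lC_{ijk}$, equation~(\ref{eqn:hijkl}), and that closure is tautological since $D_lC_{ijk}=C_{lijk}$ is already a generator. So the obstacle you describe in your final paragraph is not there.

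Relatedly, your proposed identity $\Gamma_{ij}^k=\delta^k_iK_j+\delta^k_jK_i-C_{ijm}S^{mk}$ is incomplete: the correct relation (\ref{eqn:Gamma-f}) carries an extra holomorphic term $f_{ij}^k(x)=(\partial_i\partial_j X^I)\xi_I^{\;k}$, which you cannot extract by merely comparing (\ref{eqn:special-K-tensor}) with (\ref{eqn:Dxi-CS}) (both already have $\Gamma_{ij}^k$ built into $D_i$). The genuine subtlety in the paper is the fourth relation, for $D_iK_j$: one needs Lemma~\ref{lemma:dK1} and the subsequent lemma to produce $\partial_iK_j-K_iK_j=-C_{ijk}S^k+f_{ij}^mK_m+h_{ij}$ and then observe that the non-geometric $f_{ij}^k$ cancels between $\partial_iK_j$ and $\Gamma_{ij}^kK_k$ when one forms $D_iK_j$. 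Your sketch of the $K_j$ step does not account for this cancellation, which is where the nontrivial content actually sits.
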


\vskip0.1cm
This theorem is a direct consequence of the following proposition.
\vskip0.1cm

\begin{proposition} \mylabel{prop:DS-DK}
The covariant derivative acts on the generators of 
${\mathcal R}_{\rm BCOV}^0$ by
\begin{equation}
\begin{aligned}
D_i S^{kl}&=\delta^k_i S^l+\delta^l_i S^k-C_{imn} S^{mk}S^{nl} \;,\\
D_iS^k\; &=-C_{imn}S^m S^{nk} + 2 \delta^k_i S \;,\\
D_i S\;\; &=-\frac{1}{2}C_{imn}S^m S^n \;,\\
D_i K_j &= -K_i K_j +C_{ijm}S^{mn}K_n-C_{ijm}S^m +h_{ij}\;,
\end{aligned}
\mylabel{eqn:DS-DK}
\end{equation}
where $h_{ij}$ is a holomorphic function.
\end{proposition}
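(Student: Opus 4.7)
My plan is to verify each of the four identities by direct computation, leveraging the definitions of Definition 3.1, the special K\"ahler relations (3.7) and their repackaging (2.13), $D_i\xi_j^I=C_{ijm}S^{mn}\xi_n^I$, covariant constancy of the Weil--Petersson metric ($D_ig^{\alpha\bar\beta}=0$), and the derivative formulas of Proposition 3.2. Two algebraic identities do most of the work: $\mathcal{G}^{IK}(\tau-\bar\tau)_{KJ}=-\sqrt{-1}e^{-K}\delta_J^I$ and the propagator--period identity $S^{mn}\xi_n^I=-\sqrt{-1}e^K g^{m\bar n}\bar\xi_{\bar n}^I$ (a direct consequence of (3.7) combined with the definition of $S^{mn}$). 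Throughout I shall also use $g^{k\bar k}S_{\bar 0\bar k}=S^k$ (from $S^{0k}=-S^k$ with $g^{0\bar 0}=-1$) and $S_{\bar 0\bar 0}=S^{00}=2S$ to translate zero-index propagators into the generators $S$ and $S^k$, together with $\bar\xi_{\bar 0}^I=\bar X^I$ and $\partial_i\bar X^I=0$.

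For the three $S$-identities I write $S^{kl}=g^{k\bar k}g^{l\bar l}S_{\bar k\bar l}$ with $S_{\bar k\bar l}=e^{2K}(\tau-\bar\tau)_{IJ}\bar\xi_{\bar k}^I\bar\xi_{\bar l}^J$, pull $D_i$ past the metric, and use that $S_{\bar k\bar l}$ carries only anti-holomorphic tangent indices and has $\mathcal{L}$-weight $-2$, so $D_iS_{\bar k\bar l}=(\partial_i-2K_i)S_{\bar k\bar l}$. Leibniz produces three contributions: $\partial_ie^{2K}=2K_ie^{2K}$ cancels the $-2K_iS_{\bar k\bar l}$ term; $\partial_i(\tau-\bar\tau)_{IJ}=\tau_{IJK}\xi_i^K$ (with Euler's identity $X^K\tau_{IJK}=0$ making the replacement $\partial_iX^K\to\xi_i^K$ legal) combines via the two algebraic identities above to give $-C_{imn}S^{mk}S^{nl}$; and the pieces $\partial_i\bar\xi_{\bar j}^I=g_{i\bar j}\bar X^I$ from (3.7) produce $\delta_i^kS^l+\delta_i^lS^k$ after contracting with the inverse metrics. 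The formulas for $D_iS^k$ and $D_iS$ follow by the same template with one or both $\bar\xi_{\bar k}^I$ replaced by $\bar\xi_{\bar 0}^I=\bar X^I$: since $\partial_i\bar X^I=0$ the corresponding $\delta$-contributions drop, leaving only the Yukawa term for $D_iS$ and an inhomogeneous $2\delta_i^kS$ for $D_iS^k$ (the factor of $2$ coming from $S_{\bar 0\bar 0}=2S$).

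The identity for $D_iK_j$ is the subtlest: $K_j$ is a connection on $\mathcal{L}$ rather than a tensor, and $D_iK_j=\partial_iK_j-\Gamma_{ij}^kK_k$ by (3.8). My strategy is to first establish two auxiliary identities, $\Gamma_{ij}^k=K_i\delta_j^k+K_j\delta_i^k-C_{ijm}S^{mk}$ and $K_{ij}:=\partial_iK_j=K_iK_j-C_{ijm}S^m$, after which the assertion is a one-line algebra, $D_iK_j=(K_iK_j-C_{ijm}S^m)-(2K_iK_j-C_{ijm}S^{mn}K_n)=-K_iK_j-C_{ijm}S^m+C_{ijm}S^{mn}K_n$. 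The Christoffel formula is verified by differentiating in $\bar x$: $-\partial_{\bar j}\Gamma_{il}^k=-g_{i\bar j}\delta_l^k-g_{l\bar j}\delta_i^k+C_{ilm}\partial_{\bar j}S^{mk}$, and Proposition 3.2 identifies $\partial_{\bar j}S^{mk}=e^{2K}\bar C_{\bar j\bar m\bar k}g^{m\bar m}g^{k\bar k}$, so the right-hand side matches the curvature (2.12) of Theorem 2.2; the Hessian is verified analogously using $\partial_{\bar k}K_{ij}=\partial_ig_{j\bar k}=g_{l\bar k}\Gamma_{ij}^l$ and $\partial_{\bar k}S^m=g_{\bar k l}S^{lm}$. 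Integration in $\bar x$ then pins down both formulas modulo a holomorphic ambiguity, which is precisely the ambiguity treated in the ``fixing of holomorphic ambiguity'' procedure of section (3-2) (cf.\ \cite{BCOV2}) and vanishes in the chosen period frame.

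The main obstacle is Step 3: the $\bar x$-integration leaves a holomorphic ambiguity in $\Gamma_{ij}^k$ and $K_{ij}$, and one must argue (via Hodge-theoretic Griffiths transversality applied to the projection of $\partial_i\partial_j\Omega$ onto $\Omega$) that the would-be $(\partial_i\partial_jX^I)\xi_I^0$ residue from the naive direct calculation of $g_{j\bar m}$ or of $K_j$ vanishes, or equivalently, is absorbed into the meromorphic-section interpretation of the generators. Once the Christoffel and Hessian formulas are in hand, the three $S$-identities reduce to routine index manipulations on the algebraic identities above.
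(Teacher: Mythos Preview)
Your treatment of the first three identities is essentially the paper's own computation in a dual presentation: the paper writes $S^{\alpha\beta}=\tfrac{1}{\sqrt{-1}}e^{K}\mathcal{G}^{IJ}\xi_I^{\;\alpha}\xi_J^{\;\beta}$ and differentiates using $D_i\mathcal{G}_{LM}$ and $D_i\xi_I^{\;\alpha}$, whereas you work with $S_{\bar\alpha\bar\beta}$ and the $\bar\xi$'s; the manipulations are equivalent.

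The fourth identity, however, has a genuine gap. Your two auxiliary formulas
\[
\Gamma_{ij}^{k}=K_i\delta_j^{k}+K_j\delta_i^{k}-C_{ijm}S^{mk},\qquad
\partial_iK_j=K_iK_j-C_{ijm}S^m
\]
are both \emph{false} as stated: the holomorphic integration constants do not vanish. The paper computes them explicitly (Lemma~3.8 and eq.~(3.5)):
\[
\Gamma_{ij}^{k}=\delta_i^{k}K_j+\delta_j^{k}K_i-C_{ijm}S^{mk}+f_{ij}^{k},\qquad
\partial_iK_j-K_iK_j=-C_{ijk}S^{k}+f_{ij}^{m}K_m+h_{ij},
\]
with $f_{ij}^{k}=(\partial_i\partial_jX^{I})\xi_I^{\;k}$ a genuinely nonzero holomorphic function. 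The mechanism that makes the proposition true is \emph{not} that $f_{ij}^{k}$ vanishes, but that the same term $f_{ij}^{m}K_m$ appears in both $\partial_iK_j$ and in $\Gamma_{ij}^{k}K_k$, and therefore \emph{cancels} in the combination $D_iK_j=\partial_iK_j-\Gamma_{ij}^{k}K_k$. Your proposed Griffiths-transversality argument cannot force $f_{ij}^{k}=0$ (transversality controls $\langle\Omega,\partial_i\partial_j\Omega\rangle$, not the projection onto $H^{2,1}$), and invoking the ``fixing of holomorphic ambiguity'' of section~(3-2) is a red herring: that procedure concerns the $\Gamma$-invariant lifts $\tilde S^{\alpha\beta}$ and is logically downstream of the present proposition, not an input to it. The correct route is to derive both formulas \emph{with} their holomorphic remainders, via a direct evaluation of $e^{-K}(-\partial_iK_j+K_iK_j)=\sqrt{-1}\,\partial_i\partial_jX^{I}(\bar\tau-\tau)_{IJ}\bar X^{J}-\sqrt{-1}\,\partial_iX^{I}\partial_jX^{L}\tau_{ILJ}\bar X^{J}$, and then observe the cancellation.
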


%\vskip0.3cm

\begin{proof} 
The first three equations follow from the definitions and 
the special K\"ahler 
geometry relations (\ref{eqn:spK2}), (\ref{eqn:special-K-tensor}). 
There, it is useful to write $S^{\alpha\beta}=\frac{1}{\sqrt{\text{--}1}} 
e^K {\mathcal G}^{IJ} \xi_I^{\;\alpha} \xi_{J}^{\;\beta}$ and use 
the following relations,
\[
D_i{\mathcal G}_{LM} = 
\sqrt{\text{--}1} e^K C_{ikl} \xi_L^{\;k} \xi_{M}^{\;l} \;\;,\;\;
D_i \xi_I^{\;\alpha}=-C_{imn}S^{n\alpha}\xi_I^{\; m} 
-\xi_I^{\;0}\delta_i^{\;\alpha} \;\;.
\]
For the fourth relation, we formulate the following two lemmas and 
use the relation (\ref{eqn:Gamma-f}) below.
\end{proof}

\vskip0.1cm

\begin{lemma} \mylabel{lemma:dK1}
\[
 \partial_{\bar k} \xi_I^{\;i}=0 \,,\;\;\; 
\partial_{\bar k}  \xi_I^{\;0} = - \partial_{\bar k} (K_m \xi_I^{\;m}) \;\;.
\]
\end{lemma}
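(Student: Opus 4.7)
The plan is to exploit the explicit form $\xi_\alpha^{\;I} = D_\alpha X^I$, together with the defining identity $\xi_I^{\;\alpha}\xi_\alpha^{\;J}=\delta_I^{\;J}$ for the inverse matrix. Since $\xi_0^{\;I}=X^I$ is holomorphic and $\xi_i^{\;I}=\partial_i X^I + K_i X^I$ depends on $\bar x$ only through the K\"ahler potential, one reads off immediately
\[
\partial_{\bar k}\xi_0^{\;I}=0,\qquad \partial_{\bar k}\xi_i^{\;I}=(\partial_{\bar k}K_i)X^I=g_{i\bar k}X^I=g_{i\bar k}\xi_0^{\;I},
\]
which is nothing but the anti-holomorphic component of the special K\"ahler relation $\overline D_{\bar k}\xi_i^{\;I}=g_{\bar k i}\xi_0^{\;I}$ from (\ref{eqn:special-K-tensor}) (recalling that $\overline D_{\bar k}$ reduces to $\partial_{\bar k}$ on the block of $\mathbf{\Omega}$).

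Next I would differentiate $\xi_I^{\;\alpha}\xi_\alpha^{\;J}=\delta_I^{\;J}$ in $\bar x^k$ and solve to get the standard inverse-derivative formula $\partial_{\bar k}\xi_I^{\;\alpha}=-\xi_I^{\;\beta}(\partial_{\bar k}\xi_\beta^{\;J})\xi_J^{\;\alpha}$. The first identity above kills the $\beta=0$ contribution, leaving
\[
\partial_{\bar k}\xi_I^{\;\alpha}=-\xi_I^{\;m}\,g_{m\bar k}\,X^J\xi_J^{\;\alpha}.
\]
The key contraction is $X^J\xi_J^{\;\alpha}=\xi_0^{\;J}\xi_J^{\;\alpha}=\delta_0^{\;\alpha}$, which is just the $\alpha=0$ column of the inverse-matrix identity. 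For $\alpha=i$ this Kronecker delta vanishes, giving $\partial_{\bar k}\xi_I^{\;i}=0$, the first claim.

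For the second claim, the same formula at $\alpha=0$ produces $\partial_{\bar k}\xi_I^{\;0}=-g_{m\bar k}\,\xi_I^{\;m}$. I would then compare this with $\partial_{\bar k}(K_m\xi_I^{\;m})$: invoking the first identity $\partial_{\bar k}\xi_I^{\;m}=0$ together with the K\"ahler relation $\partial_{\bar k}K_m=g_{m\bar k}$, one obtains $\partial_{\bar k}(K_m\xi_I^{\;m})=g_{m\bar k}\xi_I^{\;m}$, and hence $\partial_{\bar k}\xi_I^{\;0}=-\partial_{\bar k}(K_m\xi_I^{\;m})$ as desired. The computation is essentially routine; the only bookkeeping subtlety is keeping track of the block structure of $\partial_{\bar k}\xi_\alpha^{\;I}$ (only the holomorphic-index rows contribute) and recognizing $X^J\xi_J^{\;\alpha}$ as a column of the identity, so I do not foresee any real obstacle.
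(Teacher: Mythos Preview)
Your proof is correct and follows essentially the same route as the paper: both use the inverse-derivative formula $\partial_{\bar k}\xi_I^{\;\alpha}=-\xi_I^{\;\beta}(\partial_{\bar k}\xi_\beta^{\;J})\xi_J^{\;\alpha}$ together with $\partial_{\bar k}\xi_\beta^{\;J}=g_{\bar k\beta}\xi_0^{\;J}$ and the contraction $\xi_0^{\;J}\xi_J^{\;\alpha}=\delta_0^{\;\alpha}$ to get $\partial_{\bar k}\xi_I^{\;\alpha}=-g_{\bar k m}\xi_I^{\;m}\delta_0^{\;\alpha}$, then deduce the second claim from the case $\alpha=0$ combined with the first. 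You have simply spelled out the final step (using $\partial_{\bar k}K_m=g_{m\bar k}$ and $\partial_{\bar k}\xi_I^{\;m}=0$) more explicitly than the paper does.
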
 

%\vskip0.3cm

\begin{proof} \mylabel{lemma:dK}
The matrix $(\xi_{I}^{\;\alpha})$ is the inverse of 
$(\xi_{\alpha}^{\; I})$ by definition. 
From this, we have $\pd_{\bar k} \xi_{I}^{\; \alpha}=
-\xi_{I}^{\; \beta} \pd_{\bar k}\xi_{\beta}^{\;J} \, \xi_{J}^{\alpha}$. 
Now using 
$ \pd_{\bar k} \xi_\beta^{\;J} =
\pd_{\bar k} D_{\beta} X^{J} = g_{\bar k \beta} \xi_{0}^{\; J}$, we obtain 
\[
\pd_{\bar k} \xi_{I}^{\; \alpha} = - 
\xi_{I}^{\; \beta} g_{\bar k \beta} \xi_{0}^{\; J} \, 
\xi_{J}^{\alpha} = - g_{\bar k \beta} \xi_{I}^{\; \beta} \delta_0^{\; \alpha} . 
\]
The first claimed relation is the case when $\alpha=i$. 
The second relation follows from the case 
$\alpha=0$ together with the first relation. 
\end{proof}

\vskip0.1cm

\begin{lemma} Define $f_{ij}^k = (\pd_i\pd_j X^I) \,\xi_I^{\;k}$, 
then $f_{ij}^k$ is holomorphic and we have 
\[
\pd_i K_j - K_i K_j =-C_{ijk}S^k + f_{ij}^m K_m + h_{ij} \;\;,
\]
where $h_{ij}=-(\pd_i \pd_j X^I) h_I$ with a holomorphic function $h_I=h_I(x)$.
\end{lemma}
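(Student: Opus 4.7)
The plan is to compute $\pd_i K_j - K_i K_j$ directly by differentiating the K\"ahler potential twice, then match each resulting term against the right-hand side of the claim using special K\"ahler relations and the preceding lemma on the antiholomorphic derivatives of $\xi_I^{\;\alpha}$. First I would write $e^{-K} = X^I\bar P_I - P_I\bar X^I$, use the homogeneity consequence $\bar P_I = \bar\tau_{IJ}\bar X^J$ to reduce the first derivative to the compact form $K_j = e^K\,(\pd_j X^I)\,(\tau-\bar\tau)_{IJ}\,\bar X^J$, and then differentiate once more using $\pd_i\tau_{IJ} = \tilde C_{IJK}\pd_i X^K$ (with $\tilde C_{IJK}:=\pd_I\pd_J\pd_K\mathcal F$) together with the Euler cancellation $X^L\tilde C_{IJL}=0$ (which lets $\pd X$ be replaced by $\xi$ in every contraction with $\tilde C$). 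This should produce
\[
\pd_i K_j - K_i K_j \;=\; e^K\,(\pd_i\pd_j X^I)\,(\tau-\bar\tau)_{IJ}\,\bar X^J \;+\; e^K\,\tilde C_{IJK}\,\xi_i^{\;K}\xi_j^{\;I}\,\bar X^J.
\]

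Next I would simplify each of these two terms separately. The bilinear identity $X^I(\tau-\bar\tau)_{IJ}\bar X^J = -e^{-K}$ combined with Griffiths transversality $\xi_j^{\;I}(\tau-\bar\tau)_{IJ}\bar X^J = 0$ gives $\xi_\alpha^{\;I}(\tau-\bar\tau)_{IJ}\bar X^J = -e^{-K}\delta_\alpha^{\;0}$, which after dualising against the inverse matrix $\xi_I^{\;\alpha}$ yields the key identity
\[
e^K\,(\tau-\bar\tau)_{IJ}\,\bar X^J \;=\; -\,\xi_I^{\;0}.
\]
This turns the first term into $-(\pd_i\pd_j X^I)\xi_I^{\;0}$. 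For the second term, a short computation from Definition \ref{def:Sab} should produce $S^{\alpha\beta} = -\,\xi_I^{\;\alpha}\xi_J^{\;\beta}[(\tau-\bar\tau)^{-1}]^{IJ}$; combining this with the identity above and with $C_{ijk} = \xi_i^{\;I}\xi_j^{\;J}\xi_k^{\;K}\tilde C_{IJK}$ identifies the second term as exactly $-C_{ijk}S^k$. Thus the entire computation should collapse to
\[
\pd_i K_j - K_i K_j \;=\; -\,C_{ijk}\,S^k \;-\; (\pd_i\pd_j X^I)\,\xi_I^{\;0}.
\]

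To finish I would appeal to the preceding lemma: $\pd_{\bar k}\xi_I^{\;i}=0$ immediately gives holomorphy of $f_{ij}^k = (\pd_i\pd_j X^I)\xi_I^{\;k}$, and $\pd_{\bar k}(\xi_I^{\;0} + K_m\xi_I^{\;m}) = 0$ shows that $h_I := \xi_I^{\;0} + K_m\xi_I^{\;m}$ is a holomorphic function of $x$. Substituting $\xi_I^{\;0} = h_I - K_m\xi_I^{\;m}$ in the last display and setting $h_{ij}:=-(\pd_i\pd_j X^I)h_I$ then recovers the claimed identity. The main obstacle will be careful index bookkeeping around the Euler cancellations $X^L\tilde C_{IJL}=0$ and the identification of $S^{\alpha\beta}$; the genuinely conceptual point is that the apparently non-holomorphic residue $-(\pd_i\pd_j X^I)\xi_I^{\;0}$ only splits canonically into a $K$-linear piece plus a purely holomorphic piece by virtue of the previous lemma, which is precisely what produces the holomorphic function $h_I$.
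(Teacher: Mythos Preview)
Your proposal is correct and follows essentially the same route as the paper: differentiate $e^{-K}$ twice, identify the $\tau_{IJK}$-term with $-C_{ijk}S^k$ via the Euler relation $X^L\tau_{IJL}=0$, and rewrite the $\pd_i\pd_j X^I$-term as $-(\pd_i\pd_j X^I)\xi_I^{\;0}$, which the preceding lemma then splits into $f_{ij}^m K_m + h_{ij}$. The only caveat is a missing factor of $\sqrt{-1}$ in your expression $e^{-K}=X^I\bar P_I-P_I\bar X^I$ (the paper's convention is $e^{-K}=\sqrt{-1}(X^I\bar P_I-P_I\bar X^I)$), which propagates through your intermediate formulas but does not affect the argument.
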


%\vskip0.3cm

\begin{proof} \hspace{-4pt} 
When differentiating twice the defining relation of 
the K\"ahler potential 
$e^{-K}=\langle \Omega_x, \bar\Omega_x \rangle = 
\sqrt{\text{--}1}X^I (\overline \tau -\tau)_{IJ} \bar X^J$, we have 
\[
e^{-K}\big(\text{--}\pd_i K_j + K_i K_j \big) = 
\sqrt{\text{--}1}\pd_i\pd_j X^I (\bar\tau-\tau)_{IJ}\bar X^J 
-\sqrt{\text{--}1} \pd_i X^I \pd_j X^L \tau_{ILJ} \bar X^J,
\]
where we set $\tau_{IJK}=\frac{\pd \;}{\pd X^K} \tau_{IJ}$. 
On the other hand, by definition of $S^m$, we have 
\[
\begin{aligned} 
C_{ijm}S^m &= - e^{2K} C_{ijm}(\tau-\bar\tau)_{IJ} 
\bar \xi_{\bar 0}^{\; I} \bar \xi_{\bar m}^{\; J} g^{m\bar m} g^{0\bar 0}  \\
&= -\sqrt{\text{--}1} e^K C_{ijm} {\mathcal G}_{IJ} \bar X^I 
\bar \xi_{\bar m}^J g^{m\bar m}  \\
&= -\sqrt{\text{--}1} e^K C_{ijm} \bar X^{L} \xi_{L}^{\; m} = 
-\sqrt{\text{--}1} e^{K} \tau_{IJL} \pd_i X^I \pd_j X^J \bar X^L \;,
\end{aligned}
\]
where we use $X^M \tau_{IJM}=0$ which follows from the homogeneity 
property of ${\mathcal F(X)}$.  
Using ${\mathcal G}_{IJ}=\xi_I^{\;\alpha} g_{\alpha\bar\beta} 
\bar\xi_{J}^{\bar \beta}$ and 
$\bar\xi_{\bar 0}^J =\bar X^J$, we also have 
\[
\sqrt{\text{--}1}e^K \pd_i \pd_j X^I (\overline \tau -\tau)_{IJ} \bar X^J  \\
= - \pd_i \pd_j X^I {\mathcal G}_{IJ} \bar X^J 
= \pd_i\pd_j X^I \xi_I^{\;0} \;\;, 
\]
where, due to Lemma \ref{lemma:dK1}, we may use 
$\xi_I^{\; 0}= -\xi_{I}^{\; m} K_m + h_I$ 
with some holomorphic function $h_I$. 
Substituting all these relations into the first equation, 
we obtain the claimed formula. 
The holomorphicity of $f_{ij}^k = (\pd_i\pd_j X^I) \xi_{I}^{\; k}$ follows 
from the same Lemma \ref{lemma:dK1}.  
\end{proof}

\vskip0.1cm

From the above lemma, and $\pd_{\bar m} S^{k}=g_{m\bar m} S^{mk}$, 
we obtain 
\begin{equation}
\Gamma_{ij}^k = g^k_i K_j + g^k_j K_i -C_{ijm} S^{mk} + f_{ij}^k \;\;,
\mylabel{eqn:Gamma-f}
\end{equation}
which derives the special K\"ahler relation (\ref{eqn:spK2}) directly 
from the definitions. 

\vskip0.1cm

The connection (\ref{eqn:Gamma-f}) contains non-geometric 
object $f_{ij}^k(x)$, however this does not appear 
in the formulas (\ref{eqn:DS-DK}) since the first three equations 
follows directly from the special K\"ahler relations as we have already seen.  
For the the fourth equation, one 
observes that $f_{ij}^k(x)$ cancels in the evaluation of $D_i K_j$.

\vskip0.1cm

\begin{remark} 
The BCOV ring ${\mathcal R}_{BCOV}^0$ is not finitely generated in general, 
however it is very `close' to this property. This can be observed in 
the following formula
\begin{equation}
\begin{aligned}
D_l C_{ijk} 
&= \sum_{\{a,b\}\cup\{c,d\}={\mathcal I}} 
C_{abm}S^{mn}C_{ncd}+
\tau_{IJKL} \xi_{i}^{\; I} \xi_j^{\;J} \xi_k^{\; K} \xi_{l}^{\;L} \\
&=
\sum_{\{a,b\}\cup\{c,d\}={\mathcal I}} 
C_{abm}S^{mn}C_{ncd}
-\sum_{a\in {\mathcal I}} K_a C_{{\mathcal I}\setminus\{a\}} + h_{ijkl} 
\end{aligned}
\mylabel{eqn:hijkl}
\end{equation}
%%%%%%%%%%%%% original version %%%%%%%%%%%%%%%%%%%%%%%
%\begin{equation}
%\begin{aligned}
%&D_l C_{ijk} \\
%&=
%C_{ijm}S^{mn}C_{nkl}+
%C_{ikm}S^{mn}C_{njl}+
%C_{ilm}S^{mn}C_{njk} + 
%\tau_{IJKL} \xi_{i}^{\; I} \xi_j^{\;J} \xi_k^{\; K} \xi_{l}^{\;L} \\
%&=
%C_{ijm}S^{mn}C_{nkl}+
%C_{ikm}S^{mn}C_{njl}+
%C_{ilm}S^{mn}C_{njk} 
%-\sum_{a\in {\mathcal I}} K_a C_{{\mathcal I}\setminus\{a\}} + h_{ijkl} 
%\end{aligned}
%\mylabel{eqn:hijkl}
%\end{equation}
%%%%%%%%%%%%%%%%%%%%%%%%%%%%%%%%%%%%%%%%%%%%%%%%%%%%%
where we set ${\mathcal I}=\{ i,j,k,l \}$ and 
$h_{ijkl}:=\tau_{IJKL} 
\pd_iX^I 
\pd_jX^J 
\pd_kX^K 
\pd_lX^L$ with $\tau_{IJKL}=
\frac{\pd \;}{\pd X^I} 
\frac{\pd \;}{\pd X^J} 
\frac{\pd \;}{\pd X^K} 
\frac{\pd \;}{\pd X^L} {\mathcal F}(X)$.
%\frac{\pd^4 {\mathcal F}}{\pd X^I\pd X^J \pd X^K \pd X^L}$
Eq.(\ref{eqn:hijkl}) follows from $C_{ijk}=\tau_{IJK}\xi_i^{\; I}
\xi_j^{\;J}\xi_k^{\;K}$ with the relations (\ref{eqn:Dxi-CS}) and  
$D_k \tau_{IJK}=(\pd_k-K_k)\tau_{IJK}= 
\tau_{IJKL}\xi_k^{\;L}$. 
If the holomorphic term $h_{ijkl}$ were zero, then 
the BCOV ring ${\mathcal R}_{BCOV}^0$ reduces to a finitely generated ring. 
This finite generating property can be realized in general by considering 
the following quotient: First, let us note that under the relation 
(\ref{eqn:hijkl}), the BCOV ring may be written as 
\[
{\mathcal R}_{BCOV}^0={\bf Q}[S,S^i,S^{ij},K_i,C_{ijk},\{ h_{ij} \}, 
\{ h_{ijkl} \} ] \;\;,
\]
where $\{h_{ij}\}, \{ h_{ijkl} \}$ means the infinite sequence of the covariant 
derivations of $h_{ij}$ and $h_{ijkl}$, respectively. Considering a 
differential ideal 
${\bf Q}[\{ h_{ij} \}, \{ h_{ijkl} \} ]$, we may reduce 
the ring ${\mathcal R}_{BCOV}^0$ to the quotient
\begin{equation}
{\mathcal R}_{BCOV}^{0,red} = {\mathcal R}_{BCOV}^0 \big/ {\bf Q}
[\{ h_{ij} \}, \{ h_{ijkl} \}] \;\;.
\mylabel{eqn:red-BCOVring}
\end{equation}
We call this quotient ring {\it reduced BCOV ring}. 
\end{remark}

\vskip0.3cm

{\bf (3-2) BCOV ring ${\mathcal R}^\Gamma_{BCOV}$. }
As we have remarked in the previous section, the BCOV ring is defined in the 
algebra of global (mermorphic) sections of the bundle 
(\ref{eqn:sym-tensor}) over the covering space $\tilde{\mathcal M}$. 
Since there is a natural action of the covering group 
$\Gamma \subset Sp(2r+2,{\bf Z})$ on the sections, one may 
consider the invariants under this group action. 
Elements in ${\mathcal R}_{BCOV}^0$, in general, are not invariant under 
this group action, however they can be 'lifted'  
to define $\Gamma$-invariants by specifying a `lift' for each 
propagator (, see below). We then define ${\mathcal R}^\Gamma_{BCOV}$ 
the minimal differential ring of $\Gamma$-invariants which contains 
those $\Gamma$-invariants from ${\mathcal R}_{BCOV}^0$. 
We may call ${\mathcal R}^\Gamma_{BCOV}$ as a $\Gamma$-completion of the BCOV 
ring ${\mathcal R}_{BCOV}^0$.

Let us first note that the generators $K_i(x,\bar x)$ and $C_{ijk}(x)$ are 
invariant under the group $\Gamma$ by their definitions. 
Since the generators $S^{\alpha\beta}$ are transformed according to 
(\ref{eqn:propagator-Tr}), we modify them to $\Gamma$-invariants 
$\tilde S^{\alpha\beta}$. 

Let us assume $\tilde S^{kl}=S^{kl}+\myDelta S^{kl}$ for a $\Gamma$-invariant 
lift. Then it may be determined simply by writing the equation 
(\ref{eqn:Gamma-f}) as 
\begin{equation}
\Gamma_{ij}^k = 
\delta^k_i K_j +\delta^k_i K_j-C_{ijm}\tilde S^{mk} + \tilde f_{ij}^k \;,\quad 
(\tilde f_{ij}^k = f_{ij}^k+C_{ijm}\myDelta S^{mk}) \;,
\mylabel{eqn:Gamma-tildeS}
\end{equation}
and requiring $\Gamma$-invariance of $\tilde f_{ij}^k$. We will show,  
in the example of an elliptic curve, the simplest way to impose the invariance 
is to require  $\tilde f_{ij}^k$ to be a rational function (section) on 
${\mathcal M}$. In the 
original paper by BCOV, this process is referred to as `fixing 
holomorphic (mermorphic) ambiguity'(section 6.3 of \cite{BCOV2}).

Once $\tilde S^{kl}$ is determined in this way, the form of 
other $\Gamma$-invariant propagators $\tilde S^k$, $\tilde S$ may 
be restricted, by requiring the relations 
$\pd_{\bar k}\tilde S^l =g_{k\bar k}\tilde S^{kl}$ and 
$g_{k\bar k}\tilde S^k = \pd_{\bar k}\tilde S$ given 
in Proposition \ref{prop:DS}, to  
\begin{equation}
\tilde S^k = S^k + \myDelta S^{kl} K_l + \myDelta S^k \;\;,\;\; 
\tilde S = S + \frac{1}{2}\myDelta S^{kl} K_k K_l + \myDelta S^k K_k 
+ \myDelta S\;\;,\;\; 
\mylabel{eqn:other-S}
\end{equation}
where $\myDelta S^k$ and $\myDelta S$ are suitable mermorphic sections. 
The form of $\myDelta S^k$ 
can also be determined, in a similar way to (\ref{eqn:Gamma-tildeS}), from
\begin{equation}
\pd_i K_j - K_i K_j = 
-C_{ijk} \tilde S^k + \tilde f_{ij}^m K_m + \tilde h_{ij} \;,\;\;
(\tilde h_{ij}=h_{ij} + C_{ijk}\myDelta S^k) ,
\mylabel{eqn:Gamma-tildeS2}
\end{equation}
by requiring that $\tilde h_{ij}$ is a 
rational function (section) on ${\mathcal M}$.

\vskip0.5cm

\begin{proposition} For the $\Gamma$-invariant propagators, we have 
\begin{equation}
\begin{aligned}
D_i \tilde S^{kl}&=\delta^k_i \tilde S^l+\delta^l_i \tilde S^k-
C_{imn} \tilde S^{mk} \tilde S^{nl} + {\mathcal E}_i^{kl} \;,\\
D_i \tilde S^k\; &=-C_{imn}\tilde S^m \tilde S^{nk} + 2 \delta^k_i \tilde S + 
{\mathcal E}_i^{km} K_m + {\mathcal E}_i^k  \;,\\
D_i \tilde S\;\; &=-\frac{1}{2}C_{imn}\tilde S^m \tilde S^n + 
\frac{1}{2}{\mathcal E}_i^{kl} K_k K_l + {\mathcal E}_i^m K_m + 
{\mathcal E}_i \;,\\
D_i K_j &= -K_i K_j +C_{ijm}\tilde S^{mn}K_n-C_{ijm}\tilde S^m + 
C_{ijm}\kappa^m +\tilde h_{ij}\;,
\end{aligned}
\mylabel{eqn:Diff-tilde-S}
\end{equation}
where we set $\kappa^m=\myDelta S^m$ and  
\[
\begin{aligned}
&{\mathcal E}_i^{kl}=
{\mathcal D}^f_i \myDelta S^{kl}
-\delta^k_i \myDelta S^l -\delta^l_i \myDelta S^k +
C_{imn}\myDelta S^{mk}\myDelta S^{nl},  \\
&{\mathcal E}_i^k =
{\mathcal D}^f_i \myDelta S^k - 2 \delta^k_{i}\myDelta S + 
C_{imn}\myDelta S^n \myDelta S^{nl} 
\;\;,\;\;
{\mathcal E}_i =
{\mathcal D}_i^f \myDelta S +\frac{1}{2} C_{imn}\myDelta S^m \myDelta S^n \;.
\end{aligned}
\]
We also define ${\mathcal D}_i^f:= \pd_i + f_{i *}^*$ a (covariant) derivative 
with $f_{ij}^k(x)$ in (\ref{eqn:Gamma-f}) being treated 
as a connection. 
\end{proposition}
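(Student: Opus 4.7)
The plan is to derive each of the four identities by starting from the corresponding formula in Proposition \ref{prop:DS-DK} for the untilded propagators and substituting the definitions $\tilde S^{kl} = S^{kl} + \myDelta S^{kl}$ together with the expansions (\ref{eqn:other-S}) for $\tilde S^k$ and $\tilde S$. The key algebraic lever is the decomposition (\ref{eqn:Gamma-f}), which in $\Gamma$-invariant form reads $\Gamma_{ij}^k = \delta_i^k K_j + \delta_j^k K_i - C_{ijm}\tilde S^{mk} + \tilde f_{ij}^k$ with $\tilde f_{ij}^k = f_{ij}^k + C_{ijm}\myDelta S^{mk}$. This decomposition lets us split $D_i$ acting on any correction term into the $\mathcal{D}_i^f$-part plus explicit $K$- and $C\tilde S$-pieces, and it is precisely these pieces that collapse the cross-terms into the stated form of $\mathcal{E}_i^{\bullet}$.

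For $D_i \tilde S^{kl}$, I apply $D_i$ to $\tilde S^{kl} - \myDelta S^{kl} = S^{kl}$. Using the first line of (\ref{eqn:DS-DK}) and substituting $S^k = \tilde S^k - \myDelta S^{kl} K_l - \myDelta S^k$, $S^{mk} = \tilde S^{mk} - \myDelta S^{mk}$ on the right, the principal structure $\delta_i^k\tilde S^l + \delta_i^l\tilde S^k - C_{imn}\tilde S^{mk}\tilde S^{nl}$ appears automatically; the residue consists of $D_i \myDelta S^{kl}$, the linear pieces $-\delta_i^k\myDelta S^l - \delta_i^l\myDelta S^k$, and quadratic cross-terms $C_{imn}(\tilde S^{mk}\myDelta S^{nl} + \myDelta S^{mk}\tilde S^{nl} - \myDelta S^{mk}\myDelta S^{nl})$. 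Rewriting $D_i \myDelta S^{kl}$ using $\mathcal{D}_i^f$ and the decomposition of $\Gamma_{ij}^k$ absorbs exactly the two $\tilde S$-$\myDelta S$ cross-terms, and the signs work out so that only the $+C_{imn}\myDelta S^{mk}\myDelta S^{nl}$ part of the residue survives, matching the stated $\mathcal{E}_i^{kl}$.

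For $D_i \tilde S^k$ and $D_i \tilde S$, the same strategy applies but the $K$-products in (\ref{eqn:other-S}) must be differentiated using the fourth line of (\ref{eqn:DS-DK}); the $C_{ijm}\tilde S^{mn} K_n$ contributions produced there combine with the $C\cdot\myDelta S\cdot K$-terms from the expansion of $D_i S^k$ and $D_i S$ to leave precisely $\mathcal{D}_i^f \myDelta S^k + \cdots$ and $\mathcal{D}_i^f \myDelta S + \cdots$. The fourth identity, for $D_i K_j$, is the simplest: direct substitution $S^{mn} = \tilde S^{mn} - \myDelta S^{mn}$ and $S^m = \tilde S^m - \myDelta S^{mn}K_n - \myDelta S^m$ into the fourth line of (\ref{eqn:DS-DK}) cancels the $C_{ijm}\myDelta S^{mn} K_n$ contributions against each other and leaves exactly $+C_{ijm}\kappa^m$ with $\kappa^m = \myDelta S^m$.

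The main obstacle is not conceptual but combinatorial: the expression for $D_i \tilde S$ contains products up to $K_i K_j$ with many pairings, and one must verify that they collect into the stated $\tfrac{1}{2}\mathcal{E}_i^{kl}K_k K_l + \mathcal{E}_i^m K_m + \mathcal{E}_i$. However, a clean way to organize the check is to notice that $(\mathcal{E}_i^{kl}, \mathcal{E}_i^k, \mathcal{E}_i)$ is formally obtained by applying the same operations as the first three lines of (\ref{eqn:DS-DK}) to the triple $(\myDelta S^{kl}, \myDelta S^k, \myDelta S)$ with $\mathcal{D}_i^f$ in place of $D_i$ and with the $\delta$ and $C \cdot \cdot$ structure reflected in signs — so once the identity for $D_i \tilde S^{kl}$ is verified, the corresponding identities for $D_i \tilde S^k$ and $D_i \tilde S$ follow by the same substitutions applied one and two levels deeper in (\ref{eqn:other-S}).
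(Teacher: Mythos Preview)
Your proposal is correct and follows exactly the route the paper itself indicates: start from Proposition~\ref{prop:DS-DK}, substitute $\tilde S^{kl}=S^{kl}+\myDelta S^{kl}$ and the expansions (\ref{eqn:other-S}), and use the decomposition $\Gamma_{ij}^k=\delta_i^kK_j+\delta_j^kK_i-C_{ijm}S^{mk}+f_{ij}^k$ from (\ref{eqn:Gamma-f}) to rewrite $D_i\myDelta S^{\bullet}$ in terms of ${\mathcal D}_i^f$. The paper's own proof is nothing more than this instruction followed by ``after some algebra'', so your more detailed bookkeeping of how the $\tilde S$--$\myDelta S$ cross-terms and the $K$-dependent pieces cancel is a faithful expansion of the same argument. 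One small omission in your residue list for $D_i\tilde S^{kl}$: the substitution $S^l=\tilde S^l-\myDelta S^{lm}K_m-\myDelta S^l$ also produces terms $-\delta_i^k\myDelta S^{lm}K_m-\delta_i^l\myDelta S^{km}K_m$, which are precisely what cancel against the $\delta_i^kK_m\myDelta S^{ml}+\delta_i^lK_m\myDelta S^{km}$ contributions coming from the $\Gamma$-part of $D_i\myDelta S^{kl}$; this is implicit in your ``absorbs'' step but worth making explicit.
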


%\vskip0.3cm

\begin{proof} Use the definitions $\tilde S^{kl}, 
\tilde S^k, \tilde S$, 
$\Gamma_{ij}^k = \delta^{k}_i K_j + \delta^k_j K_i-C_{ijm}S^{mk}$ 
$+f_{ij}^k $ and Proposition \ref{prop:DS-DK} for the evaluations.
After some algebra, the claimed formulas follow. 
\end{proof}

\vskip0.1cm

If we define 
\begin{equation}
\hat{\mathcal E}_i^{kl}= {\mathcal E}_i^{kl} \;,\;\;
\hat{\mathcal E}_i^k = {\mathcal E}_i^{km}K_m + {\mathcal E}_i^k \;,\;\;
\hat{\mathcal E}_i = \frac{1}{2}{\mathcal E}_i^{kl} K_k K_l + 
{\mathcal E}_i^m K_m + {\mathcal E}_i
\mylabel{eqn:E-gens}
\end{equation}
then these are   
sections of weight $(-2,0)$ which are invariant under the action $\Gamma$. 
Considering all covariant derivatives of these tensors, and also $\kappa^m$, 
we have the minimal ring of $\Gamma$-invariants
\[
{\mathcal R}_{BCOV}^\Gamma =
{\bf Q}[\tilde S^{kl},\tilde S^k, \tilde S, K_i, \{C_{ijk}\}, 
\{ \hat{\mathcal E}_i^{kl} \}, 
\{ \hat{\mathcal E}_i^{k} \}, 
\{ \hat{\mathcal E}_i \}, \{ \kappa^m \}, \{ \tilde h_{ij} \} ] \;\;,
\]
where the bracket notation is used for the infinite set of 
the generators as before. 
We should note that the explicit forms of the new generators 
$\hat{\mathcal E}_i^{kl}, \hat{\mathcal E}_i^k, \hat{\mathcal E}_i$ and 
$\kappa^m$ depend on the `lifts' of the propagators 
$\tilde S^{ij}, \tilde S^k, \tilde S$.  Hence the ring 
${\mathcal R}_{BCOV}^\Gamma$ also depends on the lifts.

In case of elliptic curves, we will observe that 
the ring ${\mathcal R}_{BCOV}^\Gamma$ 
has a close similarity to the ring of almost 
holomorphic modular forms studied in Kaneko-Zagier \cite{KZ}.

\vskip0.3cm

{\bf (3-3) Example (elliptic curve).} 
We have introduced the BCOV ring for Calabi-Yau threefolds, however if we 
replace the special K\"ahler geometry by the geometry of upper-half plane, 
it naturally reduces to the rather standard theory 
of (almost holomorphic) modular forms\cite{KZ}. 

Let us consider a family of elliptic curves over ${\mathcal M}$ and 
its period integrals following (2-1).  We consider a family of 
hypersurfaces $Y_{\vec a}$ 
\[
{\tt W}(a):=a_0+a_1 U + a_2 V + a_3 \frac{1}{U^3V^2} =0 \; \subset ({\bf C}^*)^2
\]
in the torus $({\bf C}^*)^2$. Compactifying $({\bf C}^*)^2$ to 
a suitable toric variety ${\bf P}_\Sigma$, we obtain our family of elliptic 
curves. The moduli space ${\mathcal M}$ arises as the parameter space of 
the defining equation. Because of the natural torus actions 
on the parameters, it is easy to see that ${\mathcal M}$ is given by 
${\bf P}^1$, and we have 
\[
\Omega_x = R\Big( \frac{a_0}{{\tt W}(a)} 
\frac{dU}{U}\frac{dV}{V} \Big) \quad 
\big( x = \frac{a_1^3a_2^2a_3}{a_0^6} \in {\bf P}^1).
\]
Taking a symplectic basis $A, B$ with 
$Q=\left( \begin{smallmatrix} 0 & 1 \\ -1 & 0 \\ \end{smallmatrix} \right)$, 
we define the period integral $\vec \omega =(w_0(x),w_1(x))$. The period 
integrals satisfy the Picard-Fuchs differential equation of the form,
\[
\{ \theta_x^2 -12 x (6 \theta_x+1)(6\theta_x+5) \} \omega_i(x) =0 \;\;,
\]
where $\theta_x=x \frac{d \;}{d x}$. 
The `Griffiths-Yukawa coupling' in this case is simply defined by 
\[
C_x := - \int_{Y_x} \Omega_x \cup \frac{d\;}{d x} \Omega_x 
= \frac{1}{(1-432 x)x} \;\;.
\]
Let us fix (uniquely) the $A$ cycle by the condition that the corresponding 
period 
integral $w_0(x)$ is regular at $x=0$ and normalized by 
$\omega_0(x)=1+\cdots$. Then we take a dual cycle $B$ to $A$. With this 
choice of the basis, the period matrix takes the form 
${\bf \Omega}=(\omega_0(x) \, \omega_1(x) ) = \omega_0(x) ( \,1 \; t \,)$ 
with $t\sim \frac{1}{2\pi i} \log x + \cdots $ near $x=0$. We invert the 
relation $t=\frac{\omega_1(x)}{\omega_0(x)}$ as $x=x(t)$. Then it is 
standard to obtain the following identities 
(see eg. \cite{LY});
\begin{equation}
\frac{1}{2\pi i}\frac{1}{\omega_0(x)^2} C_x 
\frac{d x}{d t}  =1 \;\;,\;\;
\omega_0(x(t))^4 = E_4(t)  \;\;,\;\; C_x(x(t))= j(t) \;\;,
\mylabel{eqn:Elliptic-identities}
\end{equation}
where $E_4(t)$ is the Eisenstein series, $j(t)$ is the normalized 
$j$-function with their Fourier 
expansion given by $q=e^{2\pi i t}$ (and 
$\frac{1}{2\pi i} \frac{d \;}{d t}=q\frac{d \;}{d q})$. 
We also have the following useful relation
\begin{equation}
\frac{ \omega_0(x(t))^{12} }{C_x(x(t))} = \eta(t)^{24} \;\;,
\mylabel{eqn:w0-eta}
\end{equation}
in terms of the Dedekind $\eta$-function. Note that the first 
identity of (\ref{eqn:Elliptic-identities}) 
simply represents the fact that there is no quantum correction to 
the Griffiths-Yukawa coupling.

\vskip0.3cm

{\bf (3-3.a)  The BCOV ring ${\mathcal R}_{BCOV}^0={\bf Q}[S, K_x, C_x]$.} 
For an elliptic curve, Definition \ref{def:Sab} of $S=S^{00}$ 
should be read as
\[
S=\frac{1}{2\pi i} g^{0\bar 0}g^{0 \bar 0} 
e^{2K}(t-\bar t)\bar\xi_{\bar 0} \bar\xi_{\bar 0} =
\frac{1}{2\pi i}e^{2K}(t-\bar t) \bar\omega_0 \bar\omega_0 \;.
\]
with the period matrix ${\bf \Omega}=\omega_0 (\,1\, t\,)$. Here, 
for elliptic curves, we introduce the 
factor $\frac{1}{2\pi i}$ in the definition of $S$. 
For the K\"ahler potential, 
we have $e^{-K}=i \; {\bf \Omega} Q \; {}^t \overline{\!{\bf \Omega}}$. 
Then it is straightforward to obtain 
\[
S=\frac{1}{2 \pi i} \frac{1}{\omega_0^2}\frac{1}{\bar t-t} \;\;,\;\;
K_x=\frac{d t}{d x} \frac{1}{\bar t-t} 
-\frac{d\;}{dx} \log(\omega_0(x)) \;\; \;\; .
\]  

\vskip0.1cm

\begin{proposition} The BCOV ring ${\mathcal R}_{BCOV}^0$ is finitely generated 
by $S, K_x$ and $C_x$.  The covariant differential $D_x$ acts on the 
generators by 
\begin{equation}
D_x S = -C_x S S  \,, \;\;
D_x K_x = -K_xK_x-60\, C_x \,,\;\;
D_x C_x = 0 \;\;.
\mylabel{eqn:Dx-BCOV-0}
\end{equation}
\end{proposition}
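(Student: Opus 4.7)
The plan is to compute everything in the $x$-chart on $\mathcal M = \mathbf P^1$ using the explicit formulas $S = (2\pi i)^{-1}\omega_0^{-2}(\bar t - t)^{-1}$, $K_x = t'/(\bar t - t) - \omega_0'/\omega_0$ (where ${}' = d/dx$, $t' = dt/dx$), the Yukawa coupling $C_x = 1/(x(1-432 x))$, and the Picard-Fuchs identity
\[
x(1-432 x)\,\omega_0'' + (1-864 x)\,\omega_0' - 60\,\omega_0 = 0,
\]
obtained by expanding the operator $\theta_x^2 - 12 x(6\theta_x+1)(6\theta_x+5)$. For finite generation I would first note that the general BCOV ring indexes its propagators by the Hodge decomposition, but $H^1(Y_x) = H^{1,0}\oplus H^{0,1}$ has no middle index, so only $S = \tfrac12 S^{00}$ occurs --- no $S^i$ or $S^{ij}$ generators exist --- and the only $n$-point tower is $\{D_x^{n-1} C_x\}$. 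Finite generation therefore reduces to the third identity $D_x C_x = 0$.

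A single auxiliary identity unifies the three calculations: logarithmically differentiating $t' = \bigl(2\pi i\,\omega_0^2\, x(1-432 x)\bigr)^{-1}$ (an immediate consequence of (3.3.3)) gives
\[
\frac{t''}{t'} = -2\frac{\omega_0'}{\omega_0} - \frac{1-864 x}{x(1-432 x)}.
\]
Separately, from $e^{-K} = i\,\omega_0(\bar t - t)\bar\omega_0$ one computes $g_{x\bar x} = |t'|^2/(\bar t - t)^2$, hence $\Gamma^x_{xx} = \partial_x\log g_{x\bar x} = t''/t' + 2 t'/(\bar t - t)$.

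With these in hand, the first two identities are immediate. Since $S$ has weight $(-2,0)$, one has $D_x S = \partial_x S - 2 K_x S$; the $\omega_0'/\omega_0$-contributions cancel and what remains is $-t'/\bigl(2\pi i\,\omega_0^2(\bar t - t)^2\bigr)$, which equals $-C_x S^2$ via $C_x = 2\pi i\,\omega_0^2\, t'$. For $D_x C_x$, view $C_x$ as a section of $T^*\mathcal M \otimes \mathcal L^{\otimes 2}$, so that $D_x C_x = \partial_x C_x + (2 K_x - \Gamma^x_{xx}) C_x$. The two preliminary identities combine to give $2 K_x - \Gamma^x_{xx} = -\partial_x \log C_x$, and the vanishing follows at once.

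The real work is the third identity. Treating $K_x$ as a one-form in a fixed trivialization of $\mathcal L$, $D_x K_x = \partial_x K_x - \Gamma^x_{xx} K_x$; direct expansion yields
\[
-D_x K_x - K_x^2 = \frac{\omega_0''}{\omega_0} - 2\Bigl(\frac{\omega_0'}{\omega_0}\Bigr)^{\!2} - \frac{t''}{t'}\cdot\frac{\omega_0'}{\omega_0}.
\]
Substituting the preliminary identity for $t''/t'$ cancels the $(\omega_0'/\omega_0)^2$-terms and reduces the right-hand side to $\omega_0''/\omega_0 + \frac{1-864 x}{x(1-432 x)}\frac{\omega_0'}{\omega_0}$. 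The Picard-Fuchs equation then converts this combination into exactly $60/\bigl(x(1-432 x)\bigr) = 60\, C_x$. The main obstacle lies here: one must see that the combinations of $\omega_0',\omega_0''$ surviving the cancellation are precisely those produced by the Picard-Fuchs operator, and this is what furnishes the coefficient $-60$ (whose origin ultimately traces to the $240$ in $E_4 = 1 + 240\sum \sigma_3(n)q^n$ via $\omega_0^4 = E_4$). Everything else in the proof is routine bookkeeping.
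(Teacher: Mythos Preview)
Your proof is correct and follows essentially the same route as the paper: both compute the metric connection explicitly, use the identities \((3.13)\) (your $t' = C_x/(2\pi i\,\omega_0^2)$) for $D_xS$ and $D_xC_x$, and invoke the Picard--Fuchs equation to produce the coefficient $-60$ in $D_xK_x$. The paper organizes the $D_xK_x$ step slightly differently --- it differentiates $e^{-K}=i\,{\bf\Omega}\,Q\,{}^t\overline{\bf\Omega}$ twice and applies Picard--Fuchs in the form $\omega''-(C_x'/C_x)\omega'-60\,C_x\,\omega=0$ to obtain the single identity $-\partial_xK_x+K_x^2=-(C_x'/C_x)K_x+60\,C_x$, then reads off both $\Gamma^x_{xx}=2K_x+C_x'/C_x$ (by applying $\partial_{\bar x}$) and $D_xK_x$ from it --- whereas you compute $\Gamma^x_{xx}$ directly from $g_{x\bar x}$ and then expand; the content is the same.
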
 

%\vskip0.5cm

\begin{proof} It is sufficient to derive the 
differentials of generators (\ref{eqn:Dx-BCOV-0}). 
The metric connection $\Gamma_{xx}^x$ may be determined from the 
relation; 
\begin{equation}
\begin{aligned}
&(- \partial_x K_x + K_x K_x)e^{-K} \\
&=i \frac{d^2\;}{dx^2} \,{\bf \Omega} \, Q \,{}^t\overline{\!{\bf \Omega}} 
=\Big( -\frac{C_x'}{C_x}K_x+60 \, C_x \Big) e^{-K}\;,
\end{aligned}
\mylabel{eqn:dKx-elliptic}
\end{equation}
where we use the Picard-Fuchs equation 
\[
\frac{d^2\;}{dx^2} \vec \omega  - \frac{C_x'}{C_x} \frac{d\;}{dx} \vec \omega - 
60 C_x \vec\omega = \vec 0 \;\;,
\]
to rewrite $
i \frac{d^2\;}{dx^2} \,{\bf \Omega} \, Q \,{}^t\overline{\!{\bf \Omega}} = 
i \frac{d^2\;}{dx^2} \vec \omega Q \overline{{}^t \vec{\omega}}$ 
as above.  
After differentiating (\ref{eqn:dKx-elliptic}) by $\pd_{\bar x}$, we have 
$\Gamma_{xx}^x  = 2 K_x + \frac{C_x'}{C_x}$. Now using 
(\ref{eqn:dKx-elliptic}) again, we have 
\[
D_x K_x = (\pd_x + \Gamma_{xx}^x)K_x = -K_x K_x -60 C_x \;\;.
\]
Similarly, noting the generators $S, C_x$ have their weights 
$(-2,0)$ and $(2,0)$, 
respectively, and using the relations (\ref{eqn:Elliptic-identities}), 
it is straightforward to obtain the claimed relations.
\end{proof}

\vskip0.1cm

It will be useful to have the following expression for the connection 
$\Gamma_{xx}^x=2K_x+\frac{C_x'}{C_x}$;
\begin{equation}
\Gamma_{xx}^x  
 = 2 \frac{d t}{d x} \frac{1}{\bar t-t} + 
\frac{d \;}{d x} \log \Big( \frac{C_x}{\omega_0(x)^2} \Big)  
= 2 \frac{d t}{d x} \frac{1}{\bar t-t} + 
\frac{d x}{d t} \frac{d \;}{d x} \frac{d t}{d x}   \;,
\mylabel{eqn:Gamma-ellip}
\end{equation}
where we use the identify 
$\frac{d t}{d x} =\frac{1}{2\pi i}\frac{C_x}{\omega_0^2}$ in 
(\ref{eqn:Elliptic-identities}).  In particular, 
writing the first identity of (\ref{eqn:Gamma-ellip}) as 
\[
\Gamma_{xx}^x = 2 C_x S + 
\frac{d \;}{d x} \log \Big( \frac{C_x}{\omega_0(x)^2} \Big)  
= 2 C_x S + f_{xx}^x \;,
\]
one may regard this as the corresponding relation to (\ref{eqn:Gamma-f}).

\vskip0.3cm

{\bf (3-3.b) The BCOV ring ${\mathcal R}_{BCOV}^\Gamma$.}  
In our example, the covering group $\Gamma$ is given by the (genus one) 
modular subgroup $\langle 
\big( 
\begin{smallmatrix} 1 & 0 \\
-1 & 1 \end{smallmatrix} \big), 
\big( 
\begin{smallmatrix} 1 & 1 \\
0 & 1 \end{smallmatrix} \big) \rangle \subset SL(2,{\bf Z})$. 
$S$ is not invariant under the $\Gamma$ 
action, however it is clear from the form $\frac{1}{\bar t - t}$ 
that $S$ can be lifted to a $\Gamma$-invariant by
\[
S \mapsto \tilde S =\frac{1}{\omega_0^2(x)} 
\Big\{ \frac{1}{2\pi i}\frac{1}{\bar t - t} - \frac{1}{12} E_2(t) \Big\} 
\]
in terms of the Eisenstein series $E_2(t)$.  
$\tilde S$ is invariant since $E_2(t)-\frac{1}{2\pi i}
\frac{12}{\bar t - t}=E^*_2(t)$ is the almost holomorphic 
(elliptic) modular form of weight $2$ and 
$\omega_0(x)^2 = \sqrt{E_4(t)}$ for the denominator. 

In our general formulation based on (\ref{eqn:Gamma-tildeS}), 
the invariance arises in a rather weak form as follows: 
We first start with the `shift';
\[
\Gamma_{xx}^x = 2C_x S + f_{xx}^x 
= 2 C_x \tilde S + \tilde f_{xx}^x 
\quad ( \tilde f_{xx}^x:=f_{xx}^x -2 C_x \myDelta S ) \;, 
\]
where $\tilde S=S+\myDelta S$. Accordingly, the formula $D_x S$ changes to 
\[
D_x \tilde S= -C_x \tilde S \tilde S + {\mathcal E}_x 
\quad \Big(
{\mathcal E}_x:=
\pd_x \myDelta S +C_x \myDelta S \myDelta S + 2 \frac{\omega_0'}{\omega_0} 
\myDelta S \Big) \;\;.
\]

\vskip0.1cm

\begin{proposition}  $\tilde f_{xx}^x$ is a rational function of $x$ 
if and only if we set 
\[
\myDelta S= -\frac{1}{C_x} \frac{\omega_0'}{\omega_0} + r(x) \;\;,
\]
with some rational function $r(x)$. In terms of $r(x)$, ${\mathcal E}_x$ is 
given by $r'(x) + C_x r^2(x) -60$. When ${\mathcal E}_x= \lambda C_x$ with 
some constant $\lambda \in {\bf Q}$, 
%$r(x)=\frac{1}{12}\frac{C_x'}{C_x}$, 
then the BCOV ring 
${\mathcal R}_{BCOV}^\Gamma$ is finitely generated by 
$\tilde S, K_x, C_x$ with the following differentials, 
\[
D_x \tilde S = -C_x \tilde S \tilde S + \lambda C_x \,, \;\;
D_x K_x = -K_xK_x-60\, C_x \,,\;\;
D_x C_x = 0 \;\;.
\]
\end{proposition}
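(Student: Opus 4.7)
The plan is to treat the three assertions in order. For the characterization of $\myDelta S$, I would first make $f_{xx}^x$ explicit by equating the two forms of $\Gamma_{xx}^x$ given in (\ref{eqn:Gamma-ellip}) and using the identity $\frac{dt}{dx} = \frac{1}{2\pi i}\frac{C_x}{\omega_0^2}$ from (\ref{eqn:Elliptic-identities}); this yields $f_{xx}^x = \frac{dx}{dt}\frac{d}{dx}\frac{dt}{dx} = \frac{C_x'}{C_x} - 2\frac{\omega_0'}{\omega_0}$. The defining relation $\tilde f_{xx}^x = f_{xx}^x - 2 C_x\,\myDelta S$ can then be inverted algebraically to give $\myDelta S = -\frac{\omega_0'}{C_x \omega_0} + \frac{1}{2 C_x}\bigl(\frac{C_x'}{C_x} - \tilde f_{xx}^x\bigr)$, and the second summand is rational in $x$ precisely when $\tilde f_{xx}^x$ is. Identifying this summand with $r(x)$ establishes the claimed ``if and only if''; no transcendence input about $\omega_0'/\omega_0$ is needed beyond the fact that this ambiguity has been absorbed into $r(x)$.

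The technical heart is the evaluation of ${\mathcal E}_x$ under the ansatz $\myDelta S = u + r$ with $u := -\omega_0'/(C_x \omega_0)$. Using $\omega_0'/\omega_0 = -C_x u$ and substituting into the definition ${\mathcal E}_x = \pd_x \myDelta S + C_x (\myDelta S)^2 + 2(\omega_0'/\omega_0)\, \myDelta S$, the mixed $u r$ terms cancel and one of the quadratic $u^2$ contributions survives with a net minus sign, leaving ${\mathcal E}_x = (u' - C_x u^2) + r'(x) + C_x\, r(x)^2$. To dispose of the $u$ combination, I would invoke the Picard-Fuchs equation $\omega_0'' = \frac{C_x'}{C_x}\omega_0' + 60\, C_x\, \omega_0$, which for $v := \omega_0'/\omega_0$ reads $v' = \frac{C_x'}{C_x} v + 60\, C_x - v^2$. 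Differentiating $u = -v/C_x$ and substituting this identity causes the terms proportional to $v$ and to $v^2/C_x$ to cancel pairwise, collapsing $u' - C_x u^2$ to $-60$. This Picard-Fuchs bookkeeping is the step I expect to require the most care, since it is the only place where something beyond purely formal manipulation is invoked.

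For the final assertion, the hypothesis ${\mathcal E}_x = \lambda C_x$ immediately yields $D_x \tilde S = -C_x \tilde S \tilde S + \lambda C_x$. The remaining two differentials $D_x K_x = -K_x K_x - 60\, C_x$ and $D_x C_x = 0$ are those of Proposition \ref{prop:DS-DK} in the elliptic case and are unaffected by the shift $S \mapsto \tilde S$, since neither $K_x$ nor $C_x$ is redefined; indeed, tracing through the derivation of the formula for $D_x K_x$ in that Proposition shows it uses only the Picard-Fuchs equation and the identity $\Gamma_{xx}^x = 2 K_x + C_x'/C_x$, neither of which references $S$. Hence $\mathbf{Q}[\tilde S, K_x, C_x]$ is closed under the derivation $D_x$. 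Because the elliptic setup has no propagators of type $\tilde S^i$ or $\tilde S^{ij}$ and therefore produces no additional $\hat{\mathcal E}$-generators in the analog of the list (\ref{eqn:E-gens}), the ring just displayed exhausts the minimal $\Gamma$-invariant differential ring, giving ${\mathcal R}_{BCOV}^{\Gamma} = \mathbf{Q}[\tilde S, K_x, C_x]$ with the stated differentials.
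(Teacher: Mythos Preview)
Your proposal is correct and follows essentially the same route as the paper: compute $\tilde f_{xx}^x$ explicitly from $f_{xx}^x = \partial_x \log(C_x/\omega_0^2) = \frac{C_x'}{C_x} - 2\frac{\omega_0'}{\omega_0}$ to get the form of $\myDelta S$, then reduce ${\mathcal E}_x$ using the Picard-Fuchs equation for $\omega_0$, and finally observe that the differentials close among $\tilde S, K_x, C_x$. The paper's own proof is just a terse summary of exactly these three steps, so your version simply supplies the bookkeeping (the Riccati substitution $v = \omega_0'/\omega_0$ and the cancellation yielding $u' - C_x u^2 = -60$) that the paper leaves implicit.
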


\begin{proof} 
We evaluate $\tilde f_{xx}^x$ as
\[
\tilde f_{xx}^x=\pd_x \log \big( \frac{C_x}{\omega_0^2} \big) 
-2 C_x \myDelta S 
= 
-2 \frac{\omega_0'}{\omega_0} + \frac{C_x'}{C_x} - 2 C_x \myDelta S \;\;,
\]
from which the first claim is clear. For the evaluation of 
${\mathcal E}_x$, we use the Picard-Fuchs equation satisfied 
the period integral $\omega_0(x)$. The third claim is clear since 
the differentials closes among the generators. 
\end{proof}

\vskip0.1cm

The differential equation 
${\mathcal E}_x = \lambda C_x$ for $r(x)$ may be solved by hypergeometric 
series. From the solution, one may observe that there are infinitely many 
$\lambda$ for which $r(x)$ becomes rational. The simplest result 
is given by 
\[
\lambda=\frac{1}{144} \;,\;\;
r(x)=\frac{1}{12}\frac{C_x'}{C_x^2}\;,\;\;\; \tilde S = -\frac{1}{12}
\frac{E_2^*}{\omega_0^2} \;\;,
\]
where we evaluate 
$S+\myDelta S=S+\frac{1}{12}\frac{1}{C_x}\pd_x \log \frac{C_x}{\omega_0^{12}} 
= S - \frac{1}{12} \frac{1}{C_x}\pd_x \log \eta(t)^{12}$ for $\tilde S$.
Similarly, for $\lambda=\frac{25}{144}, \frac{49}{144}, \frac{121}{144}$, 
for example, we obtain $r(x)=\frac{5}{12}\frac{C_x'}{C_x^2}, 
\frac{1}{12}\frac{C_x'}{C_x^2} -\frac{1}{2}\frac{1}{1-864 x}, 
\frac{5}{12}\frac{C_x'}{C_x^2} -\frac{1}{2}\frac{1}{1-864 x}$ 
and  
\[
\tilde S=  
\frac{\text{--}1}{12 \omega_0^2} 
     \Big\{ E_2^*+4 \frac{E_6}{E_4} \Big\} ,\;
\frac{\text{--}1}{12 \omega_0^2} 
     \Big\{ E_2^*+6 \frac{E_4^2}{E_6} \Big\} ,\;
\frac{\text{--}1}{12 \omega_0^2} 
     \Big\{ E_2^*+4 \frac{E_6}{E_4}+6 \frac{E_4^2}{E_6} \Big\} ,
\]
respectively.

We note that, when the ring is finitely generated, the BCOV ring is very 
close to the ring of almost holomorphic modular forms studied in 
Kaneko-Zagier \cite{KZ}. 
For comparison, it might be useful to write our generators (for the 
case $\lambda = \frac{1}{144}$) in terms of the elliptic modular forms;
\begin{equation}
\tilde S=\frac{\text{--}1}{12}\frac{E_2^*(t)}{\omega_0(x)^2} \,,
\; C_x = j(t) \,,\;
K_x=\frac{\text{--}1}{12}\frac{j(t)}{\omega_0(x)^2}
\Big\{ E_2^*(t)-\frac{E_6(t)}{E_4(t)} \Big\} .
\mylabel{eqn:SK-by-modular}
\end{equation}
One should note, however, that the weight assignment in the BCOV ring is 
different from that of almost holomorphic modular forms. Also, 
in the BCOV ring, we have additional indices of the cotangents 
$\big( \pi^*(T^*{\mathcal M})\big)^{\otimes m}$.

\vskip0.3cm

{\bf (3-4) BCOV ring ${\mathcal R}^{hol}_{BCOV}$. }
For the applications to Gromov-Witten theory of Calabi-Yau manifolds, 
the most relevant form of the BCOV ring is the holomorphic limits of 
the invariants ${\mathcal R}_{BCOV}^\Gamma$, which is 
often referred to as ``$\bar t \rightarrow \infty$'' limit 
in physics literatures. 
For the above example of an elliptic curve, 
the meaning ``$\bar t \rightarrow \infty$'' should be clear as the `limit' 
taking the constant term of $\sum_{n \geq 0} a_m(t) \big( 
\frac{1}{\bar t -t} \big)^n$. We need to formulate a precise meaning for 
Calabi-Yau threefolds. However the idea of the limit 
should be clear from the structure ${\mathcal R}_{BCOV}^\Gamma$ with 
the differentials (\ref{eqn:Diff-tilde-S}). 
Namely, all the differentials are with respect to holomorphic coordinate, 
and therefore ``throwing away'' the 
anti-holomorphic dependence, at the cost of $\Gamma$-invariance, 
should be compatible with the differentiations. 

To describe the holomorphic limit in more detail, let us introduce 
the so-called flat coordinate. We first fix a symplectic basis 
$\{ A^I, B_J \}$ and denote the corresponding period integrals 
$(X^I(x), P_J(x))$. By the property 1) in section 
(2-2), the (half) period maps 
$x \in B (\subset {\mathcal M}) \mapsto [X^I(x)] \in {\bf P}^r$ provides 
a local isomorphism. Due to this property we may introduce 
the so-called {\it flat} coordinate $(t^a)_{a=1,\cdots r}$ by 
the relation 
\[
(X^0(x),X^1(x),\cdots,X^r(x)) = X^0(x) (1,t^1,\cdots,t^r )\;\;,
\]
near  $X^0(x) \not=0$. In this flat coordinate we have for the K\"ahler  
potential 
$
e^{-K(x,\bar x)}= i X^0(x) \overline{X^0(x)} e^{-{\mathcal K}(t,\bar t)} $ 
with 
\[
e^{-{\mathcal K}(t,\bar t)} =2 \overline{ F(t)} - 2 F(t) 
+(t^a-\bar t^a) \big( \frac{\pd F}{\pd t^a} + \overline{ \frac{\pd F}{\pd t^a} } 
\big),
\]
and $F(t)=\frac{1}{(X^0)^2} {\mathcal F}(X) = {\mathcal F}(\frac{X^a}{X^0})$. 
Connections of the bundles in these two 
local coordinates $(x^i)$ and $(t^a)$ are related by 
\[
K_i = -\pd_i \log X^0(x) + \frac{\pd t^a}{\pd x^i} {\mathcal K}_{t^a} \,, \;
\Gamma_{ij}^k = 
\frac{\pd x^k}{\pd t^c} \Gamma_{t^at^b}^{t^c} 
\frac{\pd t^a}{\pd x^i} 
\frac{\pd t^b}{\pd x^j} +  
\frac{\pd x^k}{\pd t^a}\frac{\pd \; }{\pd x^i} \frac{\pd t^b}{\pd x^j} \;.  
\]
As we see in the formula $e^{-{\mathcal K}(t,\bar t)}$, holomorphic 
and anti-holomorphic dependences are not separated by a factor like 
$f(t) g(\bar t)$ (or $f(t)+g(\bar t)$ in $\log{\mathcal K}$). 
We assume that the 'constant terms' 
against to the anti-holomorphic dependences are selected simply by 
setting to zero those expressions  written 
by ${\mathcal K}_{t^a}(t,\bar t)$ and $\Gamma_{t^at^b}^{t^c}$ (and 
also their holomorphic derivatives). 

\vskip0.1cm

\begin{definition} Choose a symplectic basis ${\mathcal B}:=\{ A^I, B_J \}$. 
Then we define the holomorphic limit 
of the elements in ${\mathcal R}_{BCOV}^\Gamma$, with respect to 
${\mathcal B}$,  by the following replacements of the connections: 
\[
K_i \rightarrow {\tt K}_i:= - \pd_i \log X^0(x) \;\;,\;\;
\Gamma_{ij}^k \rightarrow {\tt \Gamma}_{ij}^k := 
\frac{\pd x^k}{\pd t^a} \frac{\pd \; }{\pd x^i} \frac{\pd t^a}{\pd x^j} \;\;.
\]
\end{definition}

\vskip0.1cm

As remarked above, the holomorphic limit commutes with the holomorphic 
differentials $D_i$, and hence we have the same differentials 
as (\ref{eqn:Diff-tilde-S}). 
We denote the holomorphic limit 
of the generators $\tilde S^{ij},\tilde S^k, \tilde S$, respectively by 
${\tt S}^{ij}, {\tt S^k}, {\tt S}$. Also 
by ${\tt D}_i (= \pd_i \pm k {\tt K}_i \pm {\tt \Gamma}_{i*}^{*})$, 
we represent the holomorphic limit of the 
differential $D_i$. Accordingly, ${\tt K}_i$ should be assumed in the 
definitions (\ref{eqn:E-gens}) of $\hat{\mathcal E}_i^{k}, 
\hat{\mathcal E}_i$, although we use the same notation for these.
Thus, taking the holomorphic limit of the $\Gamma$-invariant 
BCOV ring, ${\mathcal R}_{BCOV}^\Gamma$, we will have {\it holomorphic} BCOV 
ring,
\[
{\mathcal R}_{BCOV}^{hol}=
{\bf Q}[{\tt S}^{ij},{\tt S^k}, {\tt S}, {\tt K}_i, \{ C_{ijk} \}, 
\{ \hat{\mathcal E}_i^{kl} \}, 
\{ \hat{\mathcal E}_i^{k} \}, 
\{ \hat{\mathcal E}_i \}, \{ \kappa^m \}, \{ \tilde h_{ij} \} ] \;\;.
\]
The concrete form of the generators ${\tt S}^{ij}$ may be determined 
from the holomorphic limit of the relation 
(\ref{eqn:Gamma-tildeS});
\begin{equation}
{\tt \Gamma}_{ij}^k = \delta^k_i {\tt K}_j + 
\delta^k_j {\tt K}_i -C_{ijm} {\tt S}^{mk} 
+ \tilde f_{ij}^k \;\;.
\mylabel{eqn:topSij}
\end{equation}
Similarly, for ${\tt S}^k$, we can use (\ref{eqn:Gamma-tildeS2}),
\begin{equation}
\pd_i {\tt K}_j -{\tt K}_i {\tt K}_j = 
-C_{ijk} {\tt S}^k + \tilde f_{ij}^m {\tt K}_m + \tilde h_{ij} \;\;. 
\mylabel{eqn:topSij2}
\end{equation}
These equations are used to determine the propagators in \cite{BCOV2}. 
As noted there, when $r \geq 2$, 
the first relation (\ref{eqn:topSij}) provides an 
overdetermined system for ${\tt S}^{ij}$, and the form of 
$\tilde f_{ij}^k$ should be restricted so that there exist 
solutions ${\tt S}^{ij}$. 
Similarly, $\tilde h_{ij}$ should be restricted so that the 
relation (\ref{eqn:topSij2}) has a solution ${\tt S}^k$.
If we find a set of solutions ${\tt S}^{ij}, {\tt S}^k$, the first 
equation of (\ref{eqn:Diff-tilde-S}) determines 
${\mathcal E}_i^{kl}$, and the second relation of (\ref{eqn:Diff-tilde-S}) 
determines ${\tt S}$ up to ${\mathcal E}_i^k$. 
As argued in \cite{BCOV2}, 
the possible forms of rational functions (sections) 
$\tilde f_{ij}^k, 
\tilde h_{ij}$ may be restricted, to some extent, by imposing regularity 
(or singularity) of ${\tt K}_i$ at certain degeneration loci of the family, 
see \cite{BCOV2}).

\vskip0.3cm

\noindent
{\bf Example 1 (elliptic curve):} 
When we take the symplectic basis ${\mathcal B}=\{ A, B \}$ as in the 
previous section, the holomorphic limit of ${\mathcal R}_{BCOV}^\Gamma$ 
is exactly the map taking the constant term of 
$\sum c_m \big( \frac{1}{\bar t-t} \big)^m$. 
From the example in the previous section, 
it is immediate to obtain (for $\lambda=\frac{1}{144}$) that
\[
{\tt S}=-\frac{1}{12} \frac{E_2(t)}{\omega_0(x)^2} \;\;,\;\;
{\tt K}_x = - \pd_x \log (\omega_0(x)) \;\;,\;\;
\Gamma_{xx}^x = \frac{\pd x}{\pd t} \frac{\pd \;}{\pd x} 
\frac{\pd t}{\pd x} \;\;.
\]
As for the differentials, we have the same form as those 
in ${\mathcal R}_{BCOV}^\Gamma$, i.e.,
\[
{\tt D}_x {\tt S} = - C_x {\tt S} {\tt S} + \frac{C_x}{144} \;\;,\;\;
{\tt D}_x {\tt K}_x = - {\tt K}_x {\tt K}_x - 60 \, C_x \;\;,\;\;
{\tt D}_x C_x =0 \;\;.
\]
These relations define the BCOV ring 
${\mathcal R}_{BCOV}^{hol}={\bf Q}[ {\tt S},{\tt K}_x, C_x]$.
The form of the generators are given simply by  
$E_2^*(t) \rightarrow E_2(t)$ in (\ref{eqn:SK-by-modular}).

\vskip0.3cm

\noindent
{\bf Example 2 (mirror quintic Calabi-Yau threefold):} 
The construction of a symplectic basis ${\mathcal B}$ 
about the so-called large complex 
structure limit has been done in \cite{CdOGP} (, see also \cite{Ho1} 
and references therein for its combinatorial construction). 
We consider the holomorphic limit with respect to this basis. 
To fix the propagators ${\tt S}^{ij}, {\tt S}^k$, we have to solve 
the equations (\ref{eqn:topSij}) and (\ref{eqn:topSij2}) 
finding suitable choices for the rational functions 
$\tilde f_{xx}^x, \tilde h_{xx}$. 
In \cite{BCOV2}, it has been found that these unknowns are 
uniquely fixed by requiring expected properties for the higher genus 
Gromov-Witten potential, ${\mathcal F}_g$, which 
comes from the anomaly equation. Here we simply translate their results 
into our conventions.  
First, the propagators ${\tt S}^{xx}, {\tt S}^x$ are determined by 
the choice $\tilde f_{xx}^x = - \frac{8}{5} \frac{1}{x}, 
\tilde h_{ij}=\frac{2}{25}\frac{1}{x^2}$ in 
\[
{\tt \Gamma}_{xx}^x = 2 {\tt K}_x  - C_{xxx}{\tt S}^{xx} 
-\frac{8}{5} \frac{1}{x}   \,, \; 
\pd_x {\tt K}_x -{\tt K}_x {\tt K}_x = 
-C_{xxx}{\tt S}^x -\frac{8}{5} \frac{1}{x} {\tt K}_x
 + \frac{2}{25}\frac{1}{x^2}  ,
\]
where $C_{xxx}=\frac{5}{x^3(1-5^5x)}$ and $x$ is related to $\psi$ in 
\cite{BCOV1}\cite{BCOV2} by $x=\frac{1}{5^5\psi^5}$. 
Then the differentials are evaluated to be 
\[
\begin{aligned} 
{\tt D}_x {\tt S}^{xx}  &=2 \, {\tt S}^x - C_{xxx} {\tt S}^{xx}{\tt S}^{xx} 
+ \frac{x}{25} \;,\\
{\tt D}_x {\tt S}^{x} \;&=2 \, {\tt S} - C_{xxx} {\tt S}^{x}{\tt S}^{xx} 
+ \frac{x}{25}{\tt K}_x - \frac{1}{125} \;,\\
{\tt D}_x {\tt S} \;\;  &=-\frac{1}{2}C_{xxx}{\tt S}^x{\tt S}^x 
+ \frac{x}{50} {\tt K}_{x}{\tt K}_{x} 
- \frac{1}{125}{\tt K}_x + \frac{2}{3125} \frac{1}{x} \;,\\
{\tt D}_x {\tt K}_x &= - {\tt K}_x {\tt K}_x + C_{xxx} {\tt S}^{xx}{\tt K}_x - 
C_{xxx}{\tt S}^x + \frac{2}{25}\frac{1}{x^2} \;,\\
\end{aligned}
\]
from which we read ${\mathcal E}_x^{xx}=\frac{x}{25}, 
{\mathcal E}_x^x = -\frac{1}{125}, 
{\mathcal E}_x = \frac{2}{3125}\frac{1}{x}$ and 
$\kappa^x=\frac{2}{25}\frac{1}{x^2}\frac{1}{C_{xxx}}$. 
With these, the BCOV ring is determined by 
\[
{\mathcal R}_{BCOV}^{hol} ={\bf Q}[{\tt S}^{xx},{\tt S}^x, {\tt S}, 
{\tt K}_x, \{ C_{xxx} \}, 
\{ \hat{\mathcal E}_x^{xx} \}, 
\{ \hat{\mathcal E}_x^{x} \}, 
\{ \hat{\mathcal E}_x \}, \{ \kappa^x \} ]  \;\;.
\]

\vskip0.3cm

\noindent
{\bf Example 3:} The Calabi-Yau manifolds whose higher genus 
Gromov-Witten invariants are studied in \cite{HK} are not complete 
intersections in toric varieties, but has an interesting property: there 
exist two different large complex structure limits (cusps) in the 
deformation space\cite{Ro}. By mirror symmetry, this phenomenon is related to 
non-birational Calabi-Yau manifolds whose derived categories of 
coherent sheaves are equivalent \cite{Ro},\cite{BC},\cite{Ku}. 
The cusps of the example in \cite{Ro},\cite{HK} are located at 
$x=0$ and $z=\frac{1}{x}=0$. The BCOV ring ${\mathcal R}_{BCOV}^{hol}$ 
with respect to a symplectic basis ${\mathcal B}_0$ at $x=0$ 
has a similar form as in Example 2 with 
\[
{\mathcal E}_x^{xx}= \frac{-1}{14}\frac{x p(x)}{(x-3)^2} \,,\;
{\mathcal E}_x^{x}= \frac{1}{14}\frac{p(x)}{(x-3)^2} \,,\;
{\mathcal E}_x= \frac{-1}{28}\frac{p(x)(x+14)+q(x)}{(x-3)^3} \,,
\]
and $\kappa^x = \frac{2}{x^2}\frac{1}{C_{xxx}}$, 
where $p(x)=x^4-716x^3+422 x^2 +452 x-15$, $q(x)=
12374 x^3-7166 x^2-7630 x+246$. The BCOV ring ${\mathcal R}_{BCOV}^{hol}$ 
at the other cusp ($z=0$) is defined  
with respect to a different symplectic basis ${\mathcal B}_{\infty}$. However 
we verify from the results in \cite{HK} that the ring is determined with 
${\mathcal E}_z^{zz}= {\mathcal E}_x^{xx} \big(\frac{dz}{dx}\big)$, 
${\mathcal E}_z^{z}= {\mathcal E}_x^x$, 
${\mathcal E}_z = {\mathcal E}_x \big(\frac{dx}{dz}\big)$ and 
$\kappa^z = \kappa^x \big(\frac{dz}{dx}\big)$. From this, we observe that the 
BCOV ring ${\mathcal R}_{BCOV}^\Gamma$ is invariant under the symplectic 
transformation which connects ${\mathcal B}_0$ and ${\mathcal B}_\infty$.

\vskip0.5cm
%\newpage

\section{{\bf BCOV holomorphic anomaly equation in ${\mathcal R}_{BCOV}^{0,red}$}}

\vskip0.1cm

{\bf (4-1) BCOV holomorphic anomaly equation. } 
The original form of the BCOV anomaly equation has been formulated based 
on the special K\"ahler geometry over the moduli 
space ${\mathcal M}$. Although mathematical ground of this anomaly 
equation has not yet been established, up to now, 
this equation provides the only way to a systematic calculation of 
higher genus Gromov-Witten potential ${\tt F}_{g}(t)$ for Calabi-Yau 
complete intersections and some cases beyond them.  
About this equation, recently, several 
important progress has been made in physics literatures \cite{YY},\cite{ABK},
\cite{HKQ},\cite{HK},\cite{AL}. 
In particular, the polynomial property found by Yamaguchi and Yau \cite{YY} 
and also in \cite{AL} is the one which we followed for our 
definition of the BCOV ring ${\mathcal R}_{BCOV}^\Gamma$. 

To summarize the recursive procedure given in \cite{BCOV2}, let us write the 
anomaly equation in the following form, which appeared in \cite{YY} and 
\cite{AL},  
\begin{equation} 
\begin{aligned}  
\frac{\pd {\mathcal F}^{(g)}}{\pd \tilde S^{ij}} &= 
\frac{1}{2} D_i D_j {\mathcal F}^{(g-1)} + 
\frac{1}{2} \sum_{h=1}^{g-1} 
D_i {\mathcal F}^{(g-h)} D_j {\mathcal F}^{(h)} \;\;,\\
0 &= 
\tilde S^{jk} \frac{\pd {\mathcal F}^{(g)}}{\pd \tilde S^k} +
\tilde S^{j} \frac{\pd {\mathcal F}^{(g)}}{\pd \tilde S} +
\frac{\pd {\mathcal F}^{(g)}}{\pd \tilde K_j} \;\; \;\;  \quad (g\geq 2). \\ 
\end{aligned}
\mylabel{eqn:BCOV-g}
\end{equation}
Then the recursion proceeds as follows, with $\tilde f_{ij}^k, \tilde h_{ij}, 
{\mathcal E}_i^k$ in (\ref{eqn:Gamma-tildeS}), (\ref{eqn:Gamma-tildeS2}),
(\ref{eqn:Diff-tilde-S}), respectively, being unknown:

\vskip0.2cm
\noindent
Step 1. We start with the fact that there exists a polynomial  
$f_0(x)$ and a choice $\tilde f_{ij}^k(x)$ such that 
\[
D_i {\mathcal F}^{(1)} = \frac{1}{2} C_{imn} \tilde S^{mn}  
-\big( \frac{\chi}{24}-1\big) K_i + f_{1,i}(x)  \;, \;\;
( f_{1,i} := \pd_i \log f_0 )
\]
gives the genus one Gromov-Witten potential ${\tt  F}_{1}(t)$ of the 
mirror Calabi-Yau manifold $X$ (with its Euler number $\chi$) when 
we take the holomorphic limit. We refer \cite{BCOV1}\cite{BCOV2} 
for details of 
${\tt  F}_{1}(t)$. The polynomial $f_0(x)$ is essentially given by the 
discriminant of the family. We define, using the bracket notation,   
\[
{\mathcal R}_{BCOV}^{\Gamma, 1} = 
{\mathcal R}_{BCOV}^{\Gamma}[\{ f_{1,i}(x) \} ] \;\;, 
\]
and regard $D_i {\mathcal F}^{(1)}$ (and $f_{1,i}$) as an element 
of weight zero in ${\mathcal R}_{BCOV}^{\Gamma, 1}$. 

\vskip0.2cm
\noindent
Step 2. 
Suppose we have $D_i {\mathcal F}^{(1)}$ and 
${\mathcal R}_{BCOV}^{\Gamma, 1}$ as above. Consider the anomaly 
equation (\ref{eqn:BCOV-g}) for $g=2$ in the ring 
${\mathcal R}_{BCOV}^{\Gamma,1}$ to find a (unique) solution 
${\mathcal F}^{(2)}_0$ of weight $(-2,0)$ under the condition 
${\mathcal F}^{(2)}_0 \big|_{\tilde S^{ij}=\tilde S^k = \tilde S =0} =0$.
Then the observation made in \cite{BCOV2} is that there exist a 
rational section $f_2(x)$ of $({\mathcal L}^{-1})^{\otimes 2}$ and suitable 
choices $\tilde h_{ij}(x)$ and ${\mathcal E}_i^k(x)$ such that 
\[
{\mathcal F}^{(2)} = {\mathcal F}^{(2)}_0 + f_2(x) \;\;,
\]
gives the Gromov-Witten potential ${\tt F}_2(t)$ under the holomorphic limit. 
($\tilde f_{ij}^k(x)$ in step 1 and $\tilde h_{ij}(x), {\mathcal E}_i^k(x)$ 
in step 2 fix the lifting $S^{\alpha\beta}$ to $\tilde S^{\alpha\beta}$, and 
thus ${\mathcal R}_{BCOV}^\Gamma$.) 
We extend our BCOV ring to 
${\mathcal R}_{BCOV}^{\Gamma, 2} \hspace{-3.5pt}= 
{\mathcal R}_{BCOV}^{\Gamma, 1}[\{ f_2(x) \}]$. 

\vskip0.3cm 
For $g\geq 3$, this procedure continues genus by genus enlarging 
the BCOV ring by some rational section $f_g(x)$ of the line bundle 
${\mathcal L}^{2-2g}$ to 
${\mathcal R}_{BCOV}^{\Gamma, g+1} =$  
${\mathcal R}_{BCOV}^{\Gamma, g}[\{f_{g}\}]$. When we define a notation 
\[
{\mathcal R}_{BCOV}^{\Gamma, \infty} = 
\lim_{\rightarrow} {\mathcal R}_{BCOV}^{\Gamma, g} \;\;, 
\]
the solutions ${\mathcal F}_g$ are (scalar) elements in this 
ring of weight $(2-2g,0)$. 

\vskip0.3cm

\begin{remark} 
In general, the ring ${\mathcal R}_{BCOV}^{\Gamma,\infty}$ is 
a large ring. However, we note that, since  
${\mathcal R}_{BCOV}^{\Gamma,\infty}$ consists of 
$\Gamma$-invariants,  
working in an affine coordinate $U \subset {\mathcal M}$ of the toric 
variety ${\mathcal M}={\bf P}_{Sec(\Sigma)}$ makes sense. 
Let us consider an affine coordinate of $U$ and consider the field 
of rational functions ${\bf Q}(x)$ on $U$. We assume that 
the unknowns ${\mathcal E}_i^{kl}, {\mathcal E}_i^k, {\mathcal E}_i, 
\kappa^m$ are rational functions on $U$ (as seen in Example 2 and 3 of (3-4)). 
Then, due to the rationality 
of $C_{ijk}$ and $f_g(x)$, we observe 
\begin{equation}
{\mathcal R}_{BCOV}^{\Gamma,\infty} \big|_U  \subset  
{\bf Q}(x)[\tilde S^{ij}, \tilde S^k, \tilde S, K_i] \;\;.
\mylabel{eqn:restrict-U}
\end{equation}
The ring in the r.h.s. of the inclusion is the local form which 
we see the BCOV ring ${\mathcal R}_{BCOV}^{\Gamma,\infty}$ in physics 
literatures, for example \cite{YY}, \cite{HKQ}, \cite{HK}. 
In reference \cite{HKQ}, 
in particular, following the idea of \cite{St2},\cite{GV}, an efficient way to 
impose certain boundary conditions to determine $f_g(x)$ has been 
found.  
\end{remark}

\vskip0.3cm
{\bf (4-2) BCOV anomaly equation in ${\mathcal R}_{BCOV}^{0,red}$.} 
As briefly sketched above, solving BCOV anomaly equation (\ref{eqn:BCOV-g}) 
contains a process finding a suitable $f_g(x)$ at each genus. 
This is the main problem 
to determine the Gromov-Witten potential ${\tt F_g}$ 
from the anomaly equation. Apart from this important 
problem, we can extract some algebraic (combinatorial) structure of the 
equation by considering the same BCOV anomaly equation (\ref{eqn:BCOV-g}) 
in the reduced ring ${\mathcal R}_{BCOV}^{0,red}$ defined in 
(\ref{eqn:red-BCOVring}).

Let us first note that ${\mathcal R}_{BCOV}^{0,red}$ is generated by 
$S^{ij}, S^k, S, K_i$ and $C_{ijk}$ over ${\bf Q}$ in the quotient ring,   
with the `reduced' differential $D_i$ (see Remark 3.9). 
Hereafter all manipulations should be understood in this quotient ring, 
although we abuse the same notations. 
The BCOV anomaly equation has the same form 
as (\ref{eqn:BCOV-g}) with obvious replacements of the generators, 
e.g. $\tilde S^{ij}$ by $S^{ij}$. 
Then the following property is due to \cite{AL}:

\vskip0.1cm

\begin{proposition} Define new generators by 
\[
\hat S^{ij}=S^{ij} \;\;,\;\;
\hat S^k=S^k -S^{km}K_m \;\;,\;\;
\hat S =S-S^m K_m + \frac{1}{2}S^{mn}K_m K_n \;\;,
\]
then the second equation of (\ref{eqn:BCOV-g}) implies 
simply $\frac{\pd {\mathcal F}^{(g)}}{\pd K_m} =0$, namely 
\[
{\mathcal F}^{(g)} \in {\bf Q}[\hat S^{ij}, \hat S^{k}, 
\hat S, C_{ijk}] \subset {\bf Q}[S^{ij}, S^k, S, K_m,C_{ijk}] \;  
( ={\mathcal R}_{BCOV}^{0,red} ) \;\;.
\]
\end{proposition}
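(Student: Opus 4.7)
The plan is to treat the substitution $(S^{ij}, S^k, S, K_m) \mapsto (\hat S^{ij}, \hat S^k, \hat S, K_m)$ as an invertible change of polynomial generators on $\mathcal{R}_{BCOV}^{0,red}$ and then apply the chain rule to the second equation of (\ref{eqn:BCOV-g}). First I would verify invertibility: solving the defining identities yields $S^{ij} = \hat S^{ij}$, $S^k = \hat S^k + \hat S^{km}K_m$, and $S = \hat S + \hat S^m K_m + \frac{1}{2}\hat S^{mn}K_m K_n$, so every element of ${\bf Q}[S^{ij}, S^k, S, K_m, C_{ijk}]$ has a unique polynomial expression in $(\hat S^{ij}, \hat S^k, \hat S, K_m, C_{ijk})$. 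This rewriting descends to the quotient defining $\mathcal{R}_{BCOV}^{0,red}$ since the $C_{ijk}$ and the generators $h_{ijkl}$ are untouched by the substitution.

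Next I would compute the chain-rule expressions for the partial derivatives appearing in the second anomaly equation. Writing $\phi$ for $\mathcal{F}^{(g)}$ expressed in the new generators, a direct differentiation of the substitution formulas gives
\[
\frac{\pd \mathcal{F}^{(g)}}{\pd S} = \frac{\pd \phi}{\pd \hat S}, \qquad \frac{\pd \mathcal{F}^{(g)}}{\pd S^k} = \frac{\pd \phi}{\pd \hat S^k} - K_k \frac{\pd \phi}{\pd \hat S},
\]
\[
\frac{\pd \mathcal{F}^{(g)}}{\pd K_j} = \frac{\pd \phi}{\pd K_j} - S^{kj}\frac{\pd \phi}{\pd \hat S^k} + \bigl(-S^j + S^{jn}K_n\bigr)\frac{\pd \phi}{\pd \hat S}.
\]
The only line requiring a moment of care is $\frac{\pd \hat S}{\pd K_j} = -S^j + S^{jn}K_n$, where one uses the symmetry of $S^{mn}$ in the expansion of $\frac{1}{2}S^{mn}K_mK_n$.

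Finally, plugging these three relations into $S^{jk}\frac{\pd \mathcal{F}^{(g)}}{\pd S^k} + S^{j}\frac{\pd \mathcal{F}^{(g)}}{\pd S} + \frac{\pd \mathcal{F}^{(g)}}{\pd K_j}=0$, the contributions proportional to $\frac{\pd \phi}{\pd \hat S^k}$ cancel because $S^{jk}=S^{kj}$, while the contributions proportional to $\frac{\pd \phi}{\pd \hat S}$ (the four terms $-S^{jk}K_k$, $+S^j$, $-S^j$, $+S^{jn}K_n$) also cancel. What survives is $\frac{\pd \phi}{\pd K_j} = 0$ for every $j$, which is the claim that $\phi$, viewed as a polynomial in the new generators and $C_{ijk}$, is independent of the $K_j$; equivalently, ${\mathcal{F}}^{(g)} \in {\bf Q}[\hat S^{ij}, \hat S^k, \hat S, C_{ijk}]$. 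The whole argument is algebraic bookkeeping with the chain rule, and no genuine obstacle arises beyond keeping the indices straight.
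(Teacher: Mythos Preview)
Your argument is correct. The change of variables is invertible as you check, the chain-rule identities you write down are accurate (including the symmetrization in $\frac{\pd \hat S}{\pd K_j}$), and the cancellations in the second anomaly equation go through exactly as you describe, leaving $\frac{\pd \phi}{\pd K_j}=0$.

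As for comparison: the paper does not supply its own proof of this proposition. It simply attributes the result to Alim--L\"ange \cite{AL} and moves on to use it. Your chain-rule computation is precisely the natural verification one would expect, and there is nothing to contrast.
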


\vskip0.1cm

Using the new generators above, the l.h.s of the first equation of 
(\ref{eqn:BCOV-g}) may be written as 
\[
\frac{\pd {\mathcal F}^{(g)}}{\pd \hat S^{ij}} - 
K_j \frac{\pd {\mathcal F}^{(g)}}{\pd \hat S^{i}} + 
\frac{1}{2} K_i K_j \frac{\pd {\mathcal F}^{(g)}}{\pd \hat S} \;\;,
%= Q_{ij} + Q_{i}K_j + Q_{j}K_i + \frac{1}{2} Q K_i K_j. 
\]
while the r.h.s. of that equation has the following expansion,
\begin{equation}
\begin{aligned}
&\frac{1}{2} D_i D_j {\mathcal F}^{(g-1)} + 
\frac{1}{2} \sum_{h=1}^{g-1} D_i {\mathcal F}^{(g-h)} D_j {\mathcal F}^{h} \\
&= Q_{ij}^{(g-1)} + Q_{i}K_j^{(g-1)} + Q_{j}^{(g-1)}K_i + 
\frac{1}{2} Q^{(g-1)} K_i K_j \;\;.
\end{aligned}
\mylabel{eqn:Q-def}
\end{equation}
Here one should note that by the above Proposition, the dependence 
on $K_i$ comes only from the covariant derivatives. 
Now comparing each coefficient of $1, K_i$, $K_iK_j$, 
we have,
 
\vskip0.1cm

\begin{proposition} The BCOV anomaly equation in ${\mathcal R}_{BCOV}^{0,red}$ 
is equivalent to the following first order system of linear 
differential equations;
\begin{equation}
\frac{\pd {\mathcal F}^{(g)}}{\pd \hat S^{ij}} = Q_{ij}^{(g-1)} \;,\; 
\frac{\pd {\mathcal F}^{(g)}}{\pd \hat S^{i}}  = -Q_i^{(g-1)}  \;,\;
\frac{\pd {\mathcal F}^{(g)}}{\pd \hat S} = Q^{(g-1)}   \;\;(g \geq 2). 
\mylabel{eqn:BCOV-linear}
\end{equation}
With the initial data $Q^{(1)}_{ij}, Q^{(1)}_i, Q^{(1)}$ 
which follow from (\ref{eqn:Q-def}) with 
$D_i {\mathcal F}^{(1)}$ $=\frac{1}{2} C_{ijk} \hat S^{jk}$ $ 
-\big( \frac{\chi}{24}-1\big)K_i$, 
this equation has a unique solution 
${\mathcal F}^{(g)} \in {\bf Q}[\hat S^{ij}, \hat S^{k}, \hat S,C_{ijk}]$ 
of weight $(2-2g,0)$.
%$(g\geq 2)$. 
\end{proposition}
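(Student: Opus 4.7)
The plan is to deduce (\ref{eqn:BCOV-linear}) from the BCOV anomaly equation (\ref{eqn:BCOV-g}) by a change of generators followed by matching the coefficients of powers of $K_i, K_j$, and then to establish uniqueness via a weight-counting argument. I would start by invoking Proposition 4.2 so that $\mathcal{F}^{(g)} \in \mathbf{Q}[\hat S^{ij}, \hat S^k, \hat S, C_{ijk}]$ (no $K_m$-dependence). Treating $(\hat S^{ij}, \hat S^k, \hat S, K_m, C_{ijk})$ as independent local coordinates in place of $(S^{ij}, S^k, S, K_m, C_{ijk})$, the chain rule yields
\[
\frac{\pd \mathcal{F}^{(g)}}{\pd S^{ij}} = \frac{\pd \mathcal{F}^{(g)}}{\pd \hat S^{ij}} -\tfrac12 K_j \frac{\pd \mathcal{F}^{(g)}}{\pd \hat S^i} -\tfrac12 K_i \frac{\pd \mathcal{F}^{(g)}}{\pd \hat S^j} + \tfrac12 K_i K_j \frac{\pd \mathcal{F}^{(g)}}{\pd \hat S},
\]
which is precisely the expression displayed just above (\ref{eqn:Q-def}).

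Next I would verify that the right hand side of the first equation in (\ref{eqn:BCOV-g}) expands as a polynomial of degree at most $2$ in $K_i, K_j$ with coefficients in $\mathbf{Q}[\hat S^{ij}, \hat S^k, \hat S, C_{ijk}]$. Using Proposition \ref{prop:DS-DK} and the definitions of the hatted generators, a direct calculation shows that $D_i\hat S^{kl}, D_i\hat S^k, D_i\hat S$ are themselves polynomials in $K$'s with coefficients in $\mathbf{Q}[\hat S^{ij}, \hat S^k, \hat S, C_{ijk}]$. Since $\mathcal{F}^{(h)}$ for $h<g$ is $K$-independent by inductive hypothesis and Prop 4.2, the $K$-dependence in $D_i\mathcal{F}^{(g-h)}$ enters only through the weight term $(2-2h)K_i$ and through the chain-rule action of $\pd_i$ on the hatted generators. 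This yields the expansion (\ref{eqn:Q-def}) with $Q^{(g-1)}_{ij}, Q^{(g-1)}_i, Q^{(g-1)} \in \mathbf{Q}[\hat S^{ij}, \hat S^k, \hat S, C_{ijk}]$. Matching the coefficients of $1$, $K_j$, and $K_iK_j$ (after symmetrization in $i,j$) on both sides gives the three equations (\ref{eqn:BCOV-linear}); reversing the same manipulations proves the converse implication.

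For uniqueness I would argue by weight counting: a monomial $(\hat S^{ij})^{a_1}(\hat S^k)^{a_2}(\hat S)^{a_3}(C_{ijk})^d$ carries weight $-2(a_1+a_2+a_3) + 2d$, so for weight $(2-2g,0)$ with $g\geq 2$ the total $\hat S$-degree satisfies $a_1+a_2+a_3 = d+g-1 \geq 1$. Hence every monomial of a weight-$(2-2g,0)$ element of $\mathbf{Q}[\hat S^{ij}, \hat S^k, \hat S, C_{ijk}]$ contains at least one $\hat S$-type factor, and the map sending such an element to its three partials with respect to $\hat S^{ij}, \hat S^k, \hat S$ is injective. Existence proceeds by induction: given the lower-genus $\mathcal{F}^{(h)}$, the tensors $Q^{(g-1)}_{ij}, Q^{(g-1)}_i, Q^{(g-1)}$ are explicitly computable polynomials and $\mathcal{F}^{(g)}$ is reconstructed by integrating the three prescribed partials. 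The chief obstacle here is to verify the integrability conditions $\pd Q^{(g-1)}_{ij}/\pd \hat S^{kl}=\pd Q^{(g-1)}_{kl}/\pd \hat S^{ij}$ together with the analogous mixed cross-derivative relations; this amounts to the self-consistency of the BCOV system at genus $g$ in the reduced ring, and should follow from the commutativity $D_iD_j\mathcal{F}^{(g-1)}=D_jD_i\mathcal{F}^{(g-1)}$ (flatness of the Gauss-Manin connection on scalars) combined with the explicit $D_i$-action on the hatted generators derived from Proposition \ref{prop:DS-DK} and the inductive closure at lower genus.
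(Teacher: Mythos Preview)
Your approach is essentially the same as the paper's: the paper derives (\ref{eqn:BCOV-linear}) by exactly the coefficient-matching argument you describe (the chain-rule identity for $\pd\mathcal{F}^{(g)}/\pd S^{ij}$ and the expansion (\ref{eqn:Q-def}) are stated in the text immediately preceding the proposition), and then remarks in one line that uniqueness follows from weight considerations for the constants of integration. Your write-up is in fact more thorough than the paper's---you spell out the weight-counting explicitly and you raise the integrability/existence issue, which the paper does not address at all; the paper simply asserts that a unique solution exists without verifying the cross-derivative compatibility conditions you identify.
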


\vskip0.1cm

The uniqueness of the solution above follows from the weight 
consideration for the possible `constants of 
integration' in the ring ${\mathcal R}_{BCOV}^{0,red}$.

\vskip0.1cm

For the application to Gromov-Witten potential ${\tt F}_g(t)$, 
the BCOV anomaly equation should be 
considered in the ring ${\mathcal R}_{BCOV}^{\Gamma,\infty}$, as 
we have summarized briefly in the previous section.  
However, the simple structure (\ref{eqn:BCOV-linear}) 
extracted above in 
${\mathcal R}_{BCOV}^{0,red}$ is  
still valid for the BCOV anomaly equation in the ring 
${\mathcal R}_{BCOV}^{\Gamma,\infty}$. 
In fact, the form (\ref{eqn:BCOV-linear}) of the BCOV equation has appeared  
first in [\cite{HK}, section (3-4)] to make the solutions 
${\mathcal F}^{(g)}$ (${\tt F}_g(t)$).

\vskip0.5cm

\section{{\bf Conclusions and discussions}}

\vskip0.1cm

After a self-contained introduction to the special K\"ahler geometry, we have 
introduced the differential ring ${\mathcal R}_{BCOV}^0$, which is 
geometric in nature. Combined with the modular property, we considered the 
$\Gamma$-invariant `lifts' $\tilde S^{ij},\tilde S^k,\tilde S$ of the 
propagators. The `lifting' process has been identified with that of fixing 
`mermorphic ambiguities' in \cite{BCOV2}. With a choice of $\Gamma$-invariant 
lifts of the propagators, we defined the ring ${\mathcal R}_{BCOV}^\Gamma$, 
which depends on the choice of the lifts. After taking a symplectic basis 
${\mathcal B}$, we defined the holomorphic limit ${\mathcal R}_{BCOV}^{hol}$ 
following\,\cite{BCOV2}. 
In case of an elliptic curve, we have shown a close relation of our BCOV 
rings to the theory of quasi-modular forms due to Kaneko-Zagier\,\cite{KZ}.  

Considering a suitable quotient, we have reduced the ring 
${\mathcal R}_{BCOV}^0$ 
to a finitely generated differential ring 
${\mathcal R}_{BCOV}^{0,red}$. 
In this reduced ring, we have extracted a simple algebraic  
structure of the BCOV holomorphic anomaly equation which still exists 
before the reduction.

\vskip0.3cm

As briefly summarized in section 4, our construction of the BCOV ring is an 
abstraction of the important progress made in \cite{YY} and 
\cite{AL} for the solutions 
of BCOV holomorphic anomaly equation. 
In 1999, in case of a rational elliptic 
surface $\frac{1}{2}K3$, M.-H.Saito, A. Takahashi and the present 
author\cite{HST} found a similar recursion formula for 
Gromov-Witten potentials, 
\begin{equation}
\frac{\pd Z_{g;n}}{\pd E_2} = 
\frac{1}{24} \sum_{
\begin{subarray}{c} g'+g'' =g \\   g',g''  \geq 0 
\end{subarray} } \sum_{s=1}^{n-1} 
s(n-s) Z_{g';s} Z_{g'';n-s} + 
\frac{n(n+1)}{24} Z_{g-1;n} \;\;,
\mylabel{eqn:Modular-ano}
\end{equation} 
in terms of quasi-modular forms 
$Z_{g;n} \text{=} P_{g;n}\frac{q^{\frac{n}{2}}}{\eta(\tau)^{12n}}$ 
($P_{g;n} \in {\bf Q}[E_2,E_4,E_6]$) with the initial data 
$Z_{0;1}=\frac{q^{\frac{1}{2}} E_4}{\eta(\tau)^{12}}$ (, this  
generalized a previous result in \cite{MNW},\cite{MNVW} 
for $g=0$ case).  Later it has been 
conjectured that the above recursion relation (\ref{eqn:Modular-ano}) is 
equivalent to the BCOV holomorphic anomaly equation  evaluating 
${\mathcal F}^{(g)}$ for $g \leq 3$ \cite{Ho2}. Due to recent progress made 
in \cite{YY} and \cite{AL}, we have now the BCOV anomaly equation of the form, 
\[
\frac{\pd {\mathcal F}^{(g)}}{\pd S^{yy}} = 
\frac{1}{2} \Big( \sum_{h=1}^{g-1} 
D_y {\mathcal F}^{(g-h)} D_y {\mathcal F}^{(h)} 
+ D_y D_y {\mathcal F}^{(g-1)} \Big) \;\;, 
\]
which is defined over a suitable BCOV ring 
${\mathcal R}_{BCOV}^{\Gamma,\infty}$. 
In this form, we can prove the equivalence of the `modular anomaly equation' 
(\ref{eqn:Modular-ano}) to the BCOV holomorphic anomaly equation. It is  
almost clear that the close relationship between the ring of the 
quasi-modular forms and our BCOV ring presented in section 3 plays a central 
role for the equivalence. The detailed results will be reported elsewhere 
\cite{HS}.

\vskip1cm
%%%%%%%%%%%%%% reference %%%%%%%%%%%%%%%%%

\end{document}